\newcommand{\floor}[1]{\left\lfloor{#1}\right\rfloor}
\newcommand{\ceil}[1]{\left\lceil{#1}\right\rceil}
\newcommand{\abs}[1]{\left\lvert{#1}\right\rvert}
\newcommand{\norm}[1]{\left\|{#1}\right\|}
\newcommand{\bb}{\mathbb} 
\newcommand{\mc}{\mathcal} 
\newcommand{\R}{\mathbb{R}}\newcommand{\N}{\mathbb{N}}
\newcommand{\Z}{\mathbb{Z}}\newcommand{\Q}{\mathbb{Q}}
\newcommand{\T}{\mathbb{T}}\newcommand{\C}{\mathbb{C}}
 \newcommand{\AK}{\mathcal{AK}^\infty}
\newcommand{\DUE}{\mathrm{DUE}}
 \newcommand{\ie}{i.e.\ }
\newcommand{\eg}{e.g.\ }
\newtheorem*{mainThmA}{Theorem A} 
\newtheorem{theorem}{Theorem}[section]
\newtheorem{proposition}[theorem]{Proposition}
\newtheorem{lemma}[theorem]{Lemma}
\theoremstyle{definition}
\theoremstyle{remark} 
\newcommand{\Diff}[2]{\mathrm{Diff}_{#1}^{#2}}
\newcommand{\Pol}{\mathfrak{Pol}}
\newcommand{\Hsw}[1]{\mathrm{SW}^{#1}} \newcommand{\cc}{\mathrm{cc}}
\newcommand{\Leb}{\mathrm{Leb}} \newcommand{\dd}{\:\mathrm{d}}
\newcommand{\Dis}[1]{\mathcal{D}^\prime_{#1}}
\DeclareMathOperator{\M}{\mathfrak{M}}
 \DeclareMathOperator{\cl}{cl}
\newcommand{\Bs}{\mathcal{S}} 
\newcommand{\n}{\mathfrak{n}}
\begin{document}

\title{On manifolds supporting distributionally uniquely ergodic
  diffeomorphisms}

\author{Artur Avila\footnote{Institut de Math\'ematiques de Jussieu,
    Paris, France; artur@math.jussieu.fr}, Bassam
  Fayad\footnote{Institut de Math\'ematiques de Jussieu, Paris,
    France; bassam@math.jussieu.fr} and Alejandro
  Kocsard\footnote{IME, Univesidade Federal Fluminense, Niter\'oi,
    Brazil; akocsard@id.uff.br}}

\date{\today}

\maketitle

\begin{abstract}
  A smooth diffeomorphism is said to be \emph{distributionally
    uniquely ergodic} (\emph{DUE} for short) when it is uniquely
  ergodic and its unique invariant probability measure is the only
  invariant distribution (up to multiplication by a constant). Ergodic
  translations on tori are classical examples of DUE diffeomorphisms.
  In this article we construct DUE diffeomorphisms supported on closed
  manifolds different from tori, providing some counterexamples to a
  conjecture proposed by Forni in \cite{ForniKatokConj}.
\end{abstract}

\section{Introduction}
\label{sec:intro}

When we study the dynamics of a homeomorphism $f\colon M\to M$ (for
the time being we can suppose $M$ is a just compact metric space), we
can consider the induced linear automorphism $f^\star$ on $C^0(M,\C)$
given by
\begin{displaymath}
  f^\star\psi:=\psi\circ f,\quad\forall\psi\in C^0(M,\C).
\end{displaymath}

If we endow $C^0(M,\C)$ with the $C^0$-uniform topology, $f^\star$
turns to be a continuous linear operator and hence, its adjoint
$f_\star$ acts on the topological dual space $(C^0(M,\C))^\prime$
which coincides, by Riesz representation theorem, with $\M(M)$, the
space of complex finite measures on $M$.

At certain extent we can say that Ergodic Theory consists in
understanding the relation between the \emph{``non-linear''} dynamics
of $f$ and the linear one of $f_\star\colon\M(M)\to\M(M)$. The fixed
points of $f_\star$, the so called $f$-invariant measures, play a key
role in this theory.

When $M$ is a closed smooth manifold and $f\colon M\to M$ is a
$C^r$-diffeomorphism, every linear subspace $C^k(M,\C)\subset
C^0(M,\C)$ (where $0\leq k\leq r\leq\infty$) is
$f^\star$-invariant. Moreover, when $C^k(M,\C)$ is equipped with the
$C^k$-uniform topology, $f^\star\colon C^k(M,\C)\to C^k(M,\C)$ turns
to be a continuous isomorphism and hence, its adjoint $f_\star$ acts
on $\Dis{k}(M)$, \ie the space of distributions up to order $k$. Of
course, the fixed points of $f_\star$ are called \emph{invariant
  distributions.}

As usual, we say $f$ is uniquely ergodic when it exhibits a unique
$f$-invariant probability measure. On the other hand, when $f$ is
$C^\infty$ and there is only one (up to multiplication by constant)
$f$-invariant distribution, we shall say that $f$ is
\emph{distributionally uniquely ergodic}, \emph{DUE} for short.
Ergodic translations on tori are the archetypical examples of DUE
diffeomorphisms. Recently, the first and third authors showed in
\cite{AviKocCohomoEqInvDistCirc} that every smooth circle
diffeomorphism with irrational rotation number is also DUE.

In 2008, Forni conjectured in \cite{ForniKatokConj} that tori are the
only closed manifolds supporting DUE diffeomorphisms.

In this paper we construct some new DUE systems, providing some
counterexamples to Forni's conjecture.  In fact, our main purpose
consists in showing the following 
\begin{mainThmA}
  Let $P$ be either 
  \begin{enumerate}[(a)]
  \item a compact nilmanifold, \ie $P=N/\Gamma$ with $N$ a nilpotent
    connected and simply connected Lie group and $\Gamma< N$ a uniform
    lattice; 
  \item or a homogeneous space of compact type, \ie $P=G/H$ where $G$
    is compact Lie group and $H<G$ a closed subgroup.
  \end{enumerate}
  Then, there exist DUE diffeomorphisms on $M:=\T\times P$.
\end{mainThmA}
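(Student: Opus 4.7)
The natural attempt is to build $f$ as a smooth skew-product over an irrational rotation on the circle factor:
$$f(\theta,p):=(\theta+\alpha,\,\phi(\theta)\cdot p),$$
where $\alpha\in\T\setminus\Q$ and $\phi\colon\T\to K$ is a smooth loop into an appropriate group of translations of $P$ ($K=N$ acting by left multiplication in case (a), $K=G$ acting on $G/H$ in case (b)). The product measure $\Leb_\T\otimes\Leb_P$ is then automatically $f$-invariant, and the goal is to choose the pair $(\alpha,\phi)$ so that no other invariant distribution exists, which by duality amounts to solving the cohomological equation $\psi=\eta\circ f-\eta$ in $C^\infty(M)$ for every smooth $\psi$ of zero mean.

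To analyze this equation, one Fourier-decomposes both $\psi$ and $\eta$ along the circle. When $\phi$ genuinely depends on $\theta$, the circle modes couple: the equation becomes a system on $C^\infty(P)$ whose diagonal entries are, for each $n\in\Z$, twisted operators of the shape $\eta_n\mapsto e^{2\pi i n\alpha}\,\eta_n\circ\phi-\eta_n$, with off-diagonal terms controlled by the Fourier coefficients of $\phi$. This coupling is essential: pure constant cocycles are inadequate because translations on nilmanifolds are well known \emph{not} to be DUE, so the $\theta$-dependence of $\phi$ must be engineered to kill precisely those fiber invariant distributions that would otherwise survive the product structure.

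The diagonal twisted equations, after further decomposing $L^2(P)$ via Peter--Weyl in case (b) or Kirillov orbit theory in case (a), split into small-divisor problems inside each irreducible $K$-representation, where the denominators can be quantified in terms of $n\alpha$ and the spectral data of the irreducible. The plan is then to build $(\alpha,\phi)$ by an Anosov--Katok type successive-approximation scheme, perturbing by small smooth conjugations at each stage so as to satisfy finitely many additional arithmetic and spectral conditions while preserving all previous ones, and maintaining $C^\infty$ convergence together with unique ergodicity of the limiting $f$.

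The main obstacle is exactly this simultaneous balancing act: unique ergodicity calls for enough mixing between base and fiber, which propagates across Fourier modes through $\phi$, but DUE demands that for every non-zero mode and every non-trivial irreducible representation appearing in $L^2(P)$ the coupled small-divisor inversion remains bounded in $C^\infty$. Arithmetic genericity of $\alpha$, density of the image of $\phi$ in $K$, and the spectral growth in the representations of $P$ all pull in different directions, and threading them together---while handling cases (a) and (b) with their respective spectral tools (nilpotent representation theory versus Peter--Weyl)---is the technical heart of the argument.
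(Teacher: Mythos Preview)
Your framework is the right one---the paper indeed builds DUE examples as homogeneous skew-products $H_{\alpha,\gamma}(t,p)=(t+\alpha,\gamma(t)p)$ and uses an Anosov--Katok scheme. But two aspects of your plan diverge from what actually makes the argument work.

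First, a conceptual slip: DUE does \emph{not} require solving $\psi=\eta\circ f-\eta$ for every zero-mean $\psi$. By Hahn--Banach (Proposition~\ref{pro:inv-dist-vs-cobound}), DUE is equivalent only to the $C^\infty$-\emph{density} of coboundaries in $C^\infty_\mu(M)$; exact solvability for all $\psi$ is cohomological $C^\infty$-stability, which already for torus rotations forces $\alpha$ to be Diophantine (Proposition~\ref{pro:one-dim-inv-dist}). Anosov--Katok limits are Liouville, so the small-divisor inversion you sketch is aimed at a target that is both too strong and incompatible with the method.

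Second, and relatedly, the paper never confronts a small-divisor problem at all. The decisive trick is to work at each stage with a \emph{rational} rotation $T_{p/q}$, so that the conjugated map $H_{0,\gamma}T_{p/q}H_{0,\gamma}^{-1}$ is $q$-periodic; a function is then a $C^\infty$-coboundary for a periodic map if and only if its Birkhoff sum over one period vanishes identically (Lemma~\ref{lem:cobound-periodic-map}). The construction reduces to the finite-dimensional problem of engineering, for each $n$ and each $q_0$, a conjugacy $H_{0,\gamma}$ commuting with $T_{1/q_0}$ and a denominator $\bar q$ such that every function in a fixed finite-dimensional slice $V_n$ of a filtration of $C^\infty_\mu(M)$ has vanishing $\bar q$-Birkhoff sum (Lemma~\ref{lem:mainLem}). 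A Baire-category argument on the Anosov--Katok closure then gives DUE generically. No operator inversion, no arithmetic conditions on $\alpha$, no spectral estimates.

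For the filtration and the conjugacies themselves: in the nilpotent case the paper uses pseudo-polynomials built from a Mal'cev basis and an inductive descent through the quotients $N^{(k)}/\Gamma^{(k)}$---not Kirillov orbit theory. In the compact case it uses Peter--Weyl as you suggest, but the substantive step is an auxiliary geometric construction of \emph{$E$-equidistributed loops} in $G$ (Theorem~\ref{thm:E-equid-dist-existence}), i.e.\ loops $\gamma$ for which finite Birkhoff averages over $\gamma(t+j/m)$ annihilate a prescribed finite-dimensional $E\subset C^\infty_\nu(G)$. This piece has no counterpart in your outline and is where the compact case is actually won.
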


It is interesting to remark that so far the most powerful techniques
to study invariant distributions (for dynamical systems which are not
hyperbolic) come from harmonic analysis. However, in general it is
very hard to apply these techniques to dynamical systems which do not
exhibit certain \emph{``homogeneity''} (\eg they preserve a smooth
Riemannian structure, or are induced by translations on homogeneous
spaces).

On the other hand, it is well-known that any DUE diffeomorphism
preserving a Riemannian structure is topologically conjugate to an
ergodic torus translation, and after some works of Flaminio and Forni
\cite{FlamForniInvDistrHorocycle,FlamForniNilflows} it was expected
that there were no DUE homogeneous systems supported on homogeneous
spaces different from tori. In fact, we recently learned that
Flaminio, Forni and F. Rodr\'\i{}guez-Hertz~\cite{fede} have shown
indeed the validity of Forni's conjecture for homogeneous systems. So
the main difficulty to prove our result consists in overcoming this
apparent obstruction to apply harmonic analysis tools.

\section*{Acknowledgments}
Since most part of this work was done while A. Kocsard was visiting
the Institut de Math\'ematique de Jussieu, in Paris, he would like to
thank P. Le Calvez for the hospitality. He is also grateful to
L. Flaminio for some insightful discussions.  A.K. was partially
supported by CAPES and CNPq (Brazil).

\section{Preliminaries and Notations}
\label{sec:prelim-notat}

\subsection{Manifolds, functional spaces and topology}
\label{sec:manif-funct-spac}

All along this paper, $M$ will denote a compact orientable smooth
manifold without boundary. Given any $r\in\bb{N}_0\cup\{\infty\}$, we
write $\Diff{}r(M)$ for the group of $C^r$-diffeomorphisms. The
subgroup of $C^r$-diffeomorphisms which are isotopic to the identity
shall be denoted by $\Diff0r(M)$.

If $N$ denotes any other smooth manifold, we write $C^r(M,N)$ for the
space of $C^r$-maps from $M$ to $N$. For the sake of simplicity, we
shall just write $C^r(M)$ instead of $C^r(M,\C)$.

Let us recall that, when $r$ is finite, the \emph{uniform
  $C^r$-topology} turns $C^r(M)$ into a Banach space and $C^r(M,N)$
turns to be a Banach manifold.

The space $C^\infty(M)$ will be endowed with its usual Fr\'echet
topology which can be defined as the projective limit of the family of
Banach spaces $(C^r(M))_{r\in\bb{N}}$. In this case, $C^\infty(M,N)$
is endowed with a Fr\'echet manifold structure.

Of course, for any $r\in\N_0\cup\{\infty\}$, we assume $\Diff{}r(M)$
equipped with the $C^r$-uniform topology inherited from its inclusion
in $C^r(M,M)$.

Finally, if $X$ is an arbitrary topological space and $z\in X$, we
shall write $\cc(X,z)$ to denote the connected component of $X$
containing $z$.

\subsection{Some arithmetical notations}
\label{sec:arithm}

Given any natural number $q\in\N$, we write $q\N:=\{qn: n\in\N\}$.

Whenever we write a single rational number in the form $p/q$ we always
assume the integers $p$ and $q$ are coprime, \ie $1$ is the greatest
common divisor of $p$ and $q$. On the other hand, writing a vector
with rational coordinates $(p_1/q,\ldots,p_n/q)\in\Q^n$, we shall
simply assume $\gcd(p_1,\ldots,p_n,q)=1$.

Given any $x\in\R$, we write $\floor{x}$ to denote the largest integer
not greater than $x$. Analogously, $\ceil{x}$ denotes the smallest
integer greater or equal than $x$.

For any $\boldsymbol{\alpha}\in\bb{R}^d$ we write
\begin{displaymath}
  \norm{\boldsymbol{\alpha}}_{\T^d}:=\mathrm{dist}
  (\boldsymbol{\alpha},\bb{Z}^d).
\end{displaymath}
Notice that, since $\norm{\boldsymbol{\alpha}+\mathbf{n}}_{\T^d} =
\norm{\boldsymbol{\alpha}}_{\T^d}$ for every $\mathbf{n}\in\bb{Z}^d$,
we can naturally consider $\|\cdot\|_{\T^d}$ as defined on $\bb{T}^d$,
too.

We say $\boldsymbol{\alpha}=(\alpha_1,\ldots,\alpha_d)\in\bb{R}^d$ is
\emph{irrational} when for every $(n_1,\ldots,n_d)\in\bb{Z}^{d}$
\begin{equation}
  \label{eq:dioph-vector-def}
  \bigg\|\sum_{i=1}^{d} n_i\alpha_i\bigg\|_{\T^d} =0 \implies n_i=0,\
  \text{for } i=1,\ldots, d.
\end{equation}

An irrational vector $\boldsymbol{\alpha}$ is said to be
\emph{Diophantine} if there exist constants $C,\tau>0$ satisfying
\begin{displaymath}
  \bigg\|\sum_{i=1}^d\alpha_iq_i\bigg\|_{\T^d}
  \geq\frac{C}{\max_i|q_i|^\tau}, 
\end{displaymath}
for every $(q_1,\ldots,q_d)\in\bb{Z}^d\setminus\{0\}$. On the other
hand, an irrational element of $\bb{R}^d$ which is not Diophantine is
called \emph{Liouville}.

\subsection{Lie groups}
\label{sec:Lie-groups}

\subsubsection{Generalities}
\label{sec:Lie-gr-generalities}

In this work we shall only deal with real connected Lie groups. As
usual, if $G$ denotes an arbitrary Lie group, its identity element is
denoted by $1_G$\footnote{Except when $G$ is abelian. In that case, we
  just write $0$ to denote its identity element.}, its Lie algebra by
$\mathfrak{g}$ and we write $\exp\colon\mathfrak{g}\to G$ for the
exponential map.

A smooth manifold is called a \emph{homogeneous space} when it can be
written as $G/H$, where $G$ denotes a (real, connected) Lie group and
$H<G$ a closed subgroup. We say $H$ is \emph{cocompact} when $G/H$ is
compact, and we say that $G/H$ is of \emph{compact type} when $G$ is
compact itself.

Clearly, the group $G$ acts naturally (on the left) on $G/H$ and it is
well known that in such a case there exists at most one $G$-invariant
Borel probability measure on $G/H$. When such a measure does exist, we
will call it the \emph{Haar measure of} $G/H$. A discrete
cocompact subgroup will be called a \emph{uniform lattice.} Let us
recall that the existence of the Haar measure on $G/H$ is guaranteed
whenever either $G$ is compact, or $H$ is a uniform lattice.

Making some abuse of notation, we will use the brackets
$[\cdot,\cdot]$ to denote the Lie brackets on $\mathfrak{g}$, as well
as the commutator operator in $G$, \ie $[g,h]:=ghg^{-1}h^{-1}$, for
any $g,h\in G$.  

More generally, if $A,B\subset G$ we define $[A,B]:=\left\langle[a,b]
  : a\in A,\ b\in B\right\rangle$, \ie the (abstract) subgroup of $G$
generated by commutators of the set subsets $A$ and $B$,
respectively. And analogously, if
$\mathfrak{h},\mathfrak{k}\subset\mathfrak{g}$, we define
$[\mathfrak{h},\mathfrak{k}]:=\mathrm{span}_\R\{[v,w] :
v\in\mathfrak{h},\ w\in\mathfrak{k}\}$.

The \emph{centers} of $G$ and $\mathfrak{g}$ are defined by
$Z(G):=\{g\in G : [g,h]=1_G,\ \forall h\in G\}$ and
$Z(\mathfrak{g}):=\{v\in\mathfrak{g} : [v,w]=0,\ \forall
w\in\mathfrak{g}\}$, respectively.

\subsubsection{Tori}
\label{sec:tori}

The $d$-dimensional torus will be denoted by $\T^d$ and will be
identified with $\R^d/\Z^d$. The canonical quotient projection will be
denoted by $\pi\colon\R^d\to\T^d$. For simplicity, we shall simply
write $\T$ for the $1$-torus, \ie the circle.

The symbol $\Leb_d$ will be used to denote the Lebesgue measure on
$\R^d$, as well as the Haar measure on $\T^d$. Once again, for the
sake of simplicity, we just write $\Leb$, and also $\dd x$, instead of
$\Leb_1$.

For each $\alpha\in\T^d$, let $R_\alpha\colon\T^d\to\T^d$ be the rigid
translation $R_\alpha : x\mapsto x+\alpha$.


\subsubsection{Homogeneous skew-products}
\label{sec:homog-skew-prod}

Given an arbitrary Lie group $G$ and any closed subgroup $H<G$, for
any $\alpha\in\T$ and any $\gamma\in C^r(\T,G)$, we define the
\emph{homogeneous skew-product}
$H_{\alpha,\gamma}\in\Diff{}{r}(\T\times G/H)$ by
\begin{displaymath}
  H_{\alpha,\gamma} : (t,gH) \mapsto (t+\alpha,\gamma(t)gH),
  \quad\forall (t,gH)\in \T\times G/H. 
\end{displaymath}

The space of $C^r$ homogeneous skew-products on $\T\times G/H$ shall
be denoted by $\Hsw{r}(\T\times G/H)$. If $G/H$ admits a Haar measure,
and we denote it by $\nu$, then we clearly have $\Hsw{r}(\T\times
G/H)\subset \Diff{\mu}{r}(\T\times G/H)$, where
$\mu:=\Leb\otimes\nu$.

\subsubsection{Nilmanifolds and Mal'cev theory}
\label{sec:malcev-theory}

Given an arbitrary Lie group $G$, the \emph{central descending series
  of } $G$ can be recursively defined by $G_{0}:=G$ and
\begin{displaymath}
  G_{n}:=[G,G_{n-1}],\quad \forall n\geq 1.
\end{displaymath}
We say $G$ is \emph{nilpotent} when $G_{k}=\{1_G\}$, for certain
$k\in\N$. The \emph{degree of nilpotency} of $G$ is defined as the
maximal natural number $n$ such that $G_n\neq\{1_G\}$.

From now on, $N$ shall denote a (connected) simply connected nilpotent
Lie group admitting a uniform lattice $\Gamma<N$. As usual, the
(compact) homogeneous space $N/\Gamma$ is called a (compact)
\emph{nilmanifold.}

It is important to recall that in such a case the exponential map
$\exp\colon\mathfrak{n}\to N$ is a real-analytic diffeomorphism (see
Theorem 1.2.1 in \cite{CorwinGreenleaf}). Hence, in this case $N$ as
well as $\n$ can be identified with the universal cover of $N/\Gamma$.

After Mal'cev \cite{MalcevHomogenSpaces}, a basis
$\{v_1,v_2,\ldots,v_d\}$ of the Lie algebra $\n$ is called a
\emph{Mal'cev basis} whenever
$\n_{(i)}:=\mathrm{span}_\R\{v_1,\ldots,v_i\}$ is an ideal in $\n$, for
each $i\in\{1,\ldots,d\}$. Moreover, such a basis is said to be a
\emph{strongly based on} $\Gamma$ when
\begin{equation}
  \label{eq:malcev-basis-Gamma-prop}
  \Gamma=\exp(\Z v_1)\exp(\Z v_2)\ldots\exp(\Z v_d).
\end{equation}

In \cite{MalcevHomogenSpaces} (see also \cite{CorwinGreenleaf}) it is
proved that there always exists a Mal'cev basis strongly based on
$\Gamma$ when $N$ and $\Gamma$ are as above.

Since each $\n_{(i)}$ is an ideal in $\n$,
$N_{(i)}:=\exp(\n_{(i)})\subset N$ turns to be a (closed) normal
subgroup of $N$, and the quotient $N^{(i)}:=N/N_{(i)}$ a nilpotent
connected and simply connected Lie group, itself.

On the other hand, as a consequence of
\eqref{eq:malcev-basis-Gamma-prop} we have
\begin{displaymath}
  \Gamma_{(i)}:=\exp\Big(\Z v_1\oplus\Z v_2\oplus\cdots\oplus\Z
  v_i\}\Big) \subset N_{(i)} 
\end{displaymath}
is a discrete subgroup of $\Gamma$. Hence,
$\Gamma^{(i)}:=\Gamma/\Gamma_{(i)}$ can be naturally identified with a
uniform lattice of $N^{(i)}$.

\subsection{Distributions and distributional unique ergodicity}
\label{sec:distributions}

Given any $k\in\bb{N}_0$, the space of \emph{distribution on $M$ up to
  order $k$} is defined as the topological dual space of $C^k(M)$ and
will be denoted by $\Dis{k}(M)$. When $k=0$, by Riesz representation
theorem $\Dis{0}(M)$ can be identified with the space of finite
complex measures on $M$ and so, it will also be denoted by $\M(M)$.

On the other hand, as it is usually done, the topological dual space
of $C^\infty(M)$ will be simply denoted by $\Dis{}(M)$ and its
elements are just called distributions.

Since all the inclusions $C^{k+1}(M)\hookrightarrow C^k(M)$ and
$C^\infty(M)\hookrightarrow C^k(M)$ are continuous, making some abuse
of notation we can consider the following chain of inclusions (modulo
restrictions):
\begin{displaymath}
  \M(M)=\Dis0(M)\subset\Dis1(M)\subset\Dis2(M)\subset\ldots\subset
  \Dis{}(M).
\end{displaymath}
Moreover, since we are assuming $M$ is compact, it is well-known that
\begin{displaymath}
  \Dis{}(M)=\bigcup_{k\geq 0}\Dis{k}(M).
\end{displaymath}

Now, as it was already mentioned in \S\ref{sec:intro}, any
$f\in\Diff{}k(M)$ acts linearly on $C^k(M)$ by pull-back, and the
adjoint of this action is the linear operator
$f_\star\colon\Dis{k}(M)\to\Dis{k}(M)$ given by
\begin{displaymath}
  \langle f_\star T,\psi\rangle:=\langle T,f^\star\psi\rangle= \langle 
  T,\psi\circ f\rangle,\quad 
  \forall\:T\in\Dis{k}(M),\ \forall\:\psi\in C^k(M).
\end{displaymath}

The space of \emph{$f$-invariant distributions up to order $k$}
is defined by 
\begin{displaymath}
  \Dis{k}(f):=\{T\in\Dis{k}(M) : f_\star T=T\}.
\end{displaymath}
Of course, when $f$ is $C^\infty$ we write $\Dis{}(f):=\bigcup_{k\geq
  0}\Dis{k}(f)$.

Given any measure $\mu\in\M(M)$ and $r\geq 0$, we define
\begin{displaymath}
  \Diff\mu{r}(M):=\{f\in\Diff{}r(M) : f_\star\mu=\mu\},
\end{displaymath}
and 
\begin{displaymath}
  C^r_\mu(M):=\left\{\phi\in C^r(M) :
    \int_M\phi\dd\mu=0\right\}.
\end{displaymath}

As usual, we say that $f$ is uniquely ergodic when $\M(f):=\Dis0(f)$
is one-dimensional. We will say that $f$ is \emph{distributionally
  uniquely ergodic} (or just \emph{DUE} for short) when $f$ is
$C^\infty$ and $\Dis{}(f)$ has dimension one.

\subsubsection{Coboundaries and distributions}
\label{sec:coub-distrib}

Given any $f\in\Diff{}r(M)$, any $\psi\colon M\to\C$ and $n\in\Z$, the
\emph{Birkhoff sum} is defined by
\begin{displaymath}
  \Bs^n\psi=\Bs^n_f\psi:=
  \begin{cases}
    \sum_{i=0}^{n-1}\psi\circ f^i & \text{if } n\geq 1; \\
    0 & \text{if } n=0; \\
    -\sum_{i=1}^{-n}\psi\circ f^{-i} & \text{if } n<0.
  \end{cases}
\end{displaymath}

We say that $\psi$ is a $C^\ell$-\emph{coboundary} for $f$ (with
$0\leq\ell\leq r$) whenever there exists $u\in C^\ell(M)$ solving the
following cohomological equation:
\begin{displaymath}
  u\circ f -u =\psi.
\end{displaymath}
Observe that in this case it holds $\Bs^n\psi=uf^n-u$, for any
$n\in\Z$.


The space of $C^\ell$-coboundaries will be denoted by
$B(f,C^\ell(M))$. Following Katok~\cite{KatokRobinson}, we say $f$ is
\emph{cohomologically $C^\ell$-stable} whenever $B(f,C^\ell(M))$ is a
closed in $C^\ell(M)$.

Finally, notice that as a straightforward consequence of Hahn-Banach
theorem we get
\begin{proposition}
  \label{pro:inv-dist-vs-cobound}
  Given any $f\in\Diff{}{k}(M)$, with $k\in\bb{N}_0\cup\{\infty\}$, it
  holds
  \begin{displaymath}
    \cl_{k}(B(f,C^k(M)))=\bigcap_{T\in\Dis{k}(f)}\ker T,
  \end{displaymath}
  where $\cl_k(\cdot)$ denotes the closure in $C^k(M)$.
\end{proposition}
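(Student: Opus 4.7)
The proposition is a standard consequence of the Hahn--Banach separation theorem applied to the closed subspace $\cl_k(B(f,C^k(M)))\subset C^k(M)$. I would split the argument into the two opposite inclusions.

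For the inclusion $\cl_k(B(f,C^k(M)))\subseteq\bigcap_{T\in\Dis{k}(f)}\ker T$, first I would show that each invariant distribution $T$ vanishes on every coboundary. This is a two-line computation: if $\psi=u\circ f-u$ with $u\in C^k(M)$, then
\begin{displaymath}
  \langle T,\psi\rangle=\langle T,u\circ f\rangle-\langle T,u\rangle=\langle f_\star T,u\rangle-\langle T,u\rangle=0,
\end{displaymath}
using that $f^\star u=u\circ f$ and $f_\star T=T$. Since $T$ is by definition continuous on $C^k(M)$, its kernel is closed, so it contains $\cl_k(B(f,C^k(M)))$ as well. Taking intersection over all $T\in\Dis{k}(f)$ yields the desired inclusion.

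For the reverse inclusion, I would argue by contraposition using Hahn--Banach. Suppose $\psi\in C^k(M)$ does not lie in $V:=\cl_k(B(f,C^k(M)))$. Since $V$ is a closed linear subspace of the locally convex space $C^k(M)$ (a Banach space when $k<\infty$, a Fr\'echet space when $k=\infty$), the Hahn--Banach theorem produces a continuous linear functional $T\in\Dis{k}(M)$ with $T|_V\equiv 0$ and $\langle T,\psi\rangle\neq 0$. It remains to check that such a $T$ is automatically $f$-invariant: for every $u\in C^k(M)$, the function $u\circ f-u$ belongs to $B(f,C^k(M))\subseteq V$, hence
\begin{displaymath}
  \langle f_\star T-T,u\rangle=\langle T,u\circ f-u\rangle=0,
\end{displaymath}
so $f_\star T=T$, i.e.\ $T\in\Dis{k}(f)$. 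This exhibits an invariant distribution that does not annihilate $\psi$, so $\psi\notin\bigcap_{T\in\Dis{k}(f)}\ker T$, as required.

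The only subtlety worth noting in the $C^\infty$ case ($k=\infty$) is that Hahn--Banach must be invoked in its locally convex form on the Fr\'echet space $C^\infty(M)$; but this is standard, and the argument is otherwise identical. No further difficulty arises, which is why the proposition is stated as an immediate consequence of Hahn--Banach.
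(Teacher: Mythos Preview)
Your proof is correct and matches the paper's approach exactly: the paper simply states that the proposition is a ``straightforward consequence of Hahn--Banach theorem'' without giving further details, and your argument supplies precisely the standard two-inclusion verification that this remark is pointing to.
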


\subsubsection{Unique ergodicity vs. DUE}
\label{sec:ue-vs-due}

There are many well-known examples of uniquely ergodic systems which
are not DUE. Maybe, the simplest one is given by the \emph{parabolic}
map 
\begin{displaymath}
  \T^2\ni (x,y)\mapsto (x+\alpha,y+x),
\end{displaymath}
with $\alpha\in\T\setminus(\Q/\Z)$ (see \cite{KatokRobinson} for
details). Horocycle flows on constant negatively curved closed
surfaces and minimal homogeneous flows on closed nilmanifolds
different from tori are more elaborated examples
\cite{FlamForniInvDistrHorocycle,FlamForniNilflows}.

On the other hand, a classical result due to Kronecker affirms that a
translation $R_\alpha\colon\T^d\to\T^d$ is uniquely ergodic ($\Leb_d$
is the only $R_\alpha$-invariant probability measure) if and only
$\alpha=(\alpha_1,\ldots,\alpha_d)$ is irrational.

Moreover, we have the following result which belongs to the
\emph{folklore}:
\begin{proposition}
  \label{pro:one-dim-inv-dist}
  $R_\alpha$ is DUE, and it is cohomologically $C^\infty$-stable if
  and only if $\alpha$ is a Diophantine vector.
\end{proposition}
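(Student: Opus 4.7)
My plan is to translate everything into Fourier data on $\T^d$. Smooth functions correspond to Fourier coefficients of rapid decay and distributions to coefficients of polynomial growth, and $R_\alpha$ acts diagonally in the Fourier basis: $e_n\circ R_\alpha = e^{2\pi i\, n\cdot\alpha}\,e_n$ where $e_n(x):=e^{2\pi i\, n\cdot x}$. If $T\in\Dis{}(R_\alpha)$, then $\hat T(n)\,(1-e^{2\pi i\, n\cdot\alpha})=0$ for every $n$, so when $\alpha$ is irrational the only surviving coefficient is $\hat T(0)$ and $T$ is a scalar multiple of $\Leb_d$. Thus $R_\alpha$ is DUE whenever $\alpha$ is irrational, and conversely DUE forces $\alpha$ irrational (otherwise some $n\neq 0$ with $n\cdot\alpha\in\Z$ yields $e_n$ as a second invariant distribution).

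Combining DUE with Proposition~\ref{pro:inv-dist-vs-cobound}, $\cl_\infty B(R_\alpha,C^\infty(\T^d)) = \ker\Leb_d = C^\infty_{\Leb_d}(\T^d)$, so cohomological $C^\infty$-stability is equivalent to: every zero-mean smooth function is a smooth coboundary. In Fourier terms, for $\psi\in C^\infty_{\Leb_d}(\T^d)$ the only candidate solution of $u\circ R_\alpha - u = \psi$ is $\hat u(n):=\hat\psi(n)/(e^{2\pi i\, n\cdot\alpha}-1)$ for $n\neq 0$, and the whole problem reduces to the smoothness of this formal series. If $\alpha$ is Diophantine, then $|e^{2\pi i\, n\cdot\alpha}-1|\geq 4\|n\cdot\alpha\|_\T\geq 4C|n|^{-\tau}$, so $|\hat u(n)|\leq (4C)^{-1}|n|^\tau|\hat\psi(n)|$ still decays rapidly and $u\in C^\infty$, proving stability.

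For the converse, DUE reduces us to $\alpha$ irrational, and it remains to rule out the Liouville case by producing a zero-mean $\psi$ which is not a $C^\infty$-coboundary. I would pick $(q_k)\subset\Z^d\setminus\{0\}$ with $|q_k|\to\infty$ and $\|q_k\cdot\alpha\|_\T<|q_k|^{-k}$, set $a_k:=\|q_k\cdot\alpha\|_\T^{1/2}$ and $\psi:=\sum_k a_k\, e_{q_k}$. The bound $a_k<|q_k|^{-k/2}$ makes $\psi\in C^\infty_{\Leb_d}(\T^d)$; its finite truncations $\psi_M$ are zero-mean trigonometric polynomials, hence smooth coboundaries for any irrational $\alpha$, converging to $\psi$ in $C^\infty$; yet $\psi$ itself is not a coboundary, since the unique formal solution has $|\hat u(q_k)|\sim\|q_k\cdot\alpha\|_\T^{-1/2}\to\infty$, incompatible even with $u\in L^2$. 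Hence $B(R_\alpha,C^\infty(\T^d))$ is not closed, contradicting stability. The main technical obstacle lies in this calibration: $(a_k)$ must decay fast enough in $|q_k|$ to keep $\psi$ smooth while remaining large compared to the super-polynomially small $\|q_k\cdot\alpha\|_\T$ so that the candidate coboundary diverges; the square-root choice is the standard way of achieving both simultaneously.
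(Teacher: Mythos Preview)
Your argument is correct and is the standard Fourier-analytic proof of this folklore result. The paper itself does not give an argument: its proof consists solely of the reference ``See Proposition~2.3 in \cite{AviKocCohomoEqInvDistCirc} for a proof.'' The approach in that reference is precisely the one you outline---diagonalising $R_\alpha^\star$ in the Fourier basis, reading off DUE from the irrationality of $\alpha$, and characterising cohomological $C^\infty$-stability via the small-divisor estimate $\lvert e^{2\pi i n\cdot\alpha}-1\rvert\asymp \norm{n\cdot\alpha}_\T$---so there is no methodological difference to report. One cosmetic remark: to guarantee that $\psi=\sum_k a_k e_{q_k}$ lies in $C^\infty(\T^d)$ you implicitly use that $\abs{q_k}\to\infty$ fast enough for $\sum_k \abs{q_k}^{N-k/2}$ to converge for every $N$; this is automatic after passing to a subsequence (e.g.\ ensuring $\abs{q_k}\geq 2$), and you may want to say so explicitly.
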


\begin{proof}
  See Proposition 2.3 in \cite{AviKocCohomoEqInvDistCirc} for a proof.
\end{proof}

Recently, the first and third authors extended this result in
\cite{AviKocCohomoEqInvDistCirc} showing that any minimal circle
diffeomorphism is also DUE (see \cite{NavasTriestinoInvDist} for a
much simpler proof of non-existence of invariant distributions up to
order $1$).

As it was already mentioned in \S\ref{sec:intro}, the main aim of
this paper consists in constructing DUE diffeomorphism which are not
topologically conjugate to ergodic translations on tori.

\section{Proof of Theorem A: general strategy}
\label{sec:strat-proof}

As it was already mentioned in \S\ref{sec:Lie-groups}, in both cases
considered in Theorem A the homogeneous space $P$ admits a Haar
measure that will be denoted by $\nu_P$. Then, the Haar measure of
$M$, which is a homogeneous space itself, is given by
$\mu:=\Leb_1\otimes\nu_P$.

Let us recall that any homogeneous skew-product on $M=\T\times P$ (see
\S\ref{sec:homog-skew-prod}) preserves the measure $\mu$. In other
words, $\Hsw{r}(\T\times P)\subset\Diff{\mu}{r}(M)$.

\subsection{The Anosov-Katok space}
\label{sec:anosov-katok-space}

Let us consider the \emph{horizontal} $\T$-action $T\colon\T\times
M\to M$ given by 
\begin{displaymath}
  T_\alpha(t,p)=T\big(\alpha,(t,p)\big)=(t+\alpha,p),\quad \forall
  (t,p)\in M=\T\times P.
\end{displaymath}

Then we define the \emph{Anosov-Katok space}
\begin{equation}
  \label{eq:anosov-katok-space-smooth}
  \AK(T):=\cl_\infty\left\{H\circ T_\alpha\circ
    H^{-1} : \alpha\in\T,\
    H\in\Hsw{\infty}(\T\times P)\right\}.
\end{equation}
Observe that each $T_\alpha\in\Hsw\infty(\T\times P)$ and hence,
$\AK(T)\subset\Hsw{\infty}(\T\times P)$.

To prove Theorem A we will show 
\begin{theorem} 
  \label{thm:mainThm-AK}
  Generic diffeomorphisms in $\AK(T)$ are DUE. More precisely, the set 
\begin{displaymath}
  \DUE(T):=\left\{f\in\AK(T) : \dim\Dis{}(f)=1\right\}
\end{displaymath}
contains a dense $G_\delta$-subset of $\AK(T)$.
\end{theorem}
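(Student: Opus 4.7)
The plan is a Baire-category argument combined with the Anosov--Katok conjugation scheme. Since $C^\infty_\mu(M)$ is separable, fix a countable dense subset $\{\psi_j\}_{j\in\N}$, and for positive integers $j,k,n$ define
\begin{displaymath}
U_{j,k,n} := \bigl\{ f\in\AK(T) : \exists\,u\in C^k(M),\ \|u\circ f-u-\psi_j\|_{C^k}<1/n\bigr\}.
\end{displaymath}
Each $U_{j,k,n}$ is open in $\AK(T)$ because, for fixed $u$, the map $f\mapsto u\circ f-u$ is $C^k$-continuous. If $f$ lies in every $U_{j,k,n}$, then by density of the $\psi_j$ we have $C^\infty_\mu(M)\subset \cl_k B(f,C^k(M))$ for every $k$; Proposition \ref{pro:inv-dist-vs-cobound} then forces any $T\in\Dis{k}(f)$ to annihilate $C^\infty_\mu(M)$, so $T$ is a multiple of $\mu$ and $f$ is DUE. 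Since $\AK(T)$ is a closed subset of the Polish space $\Hsw{\infty}(\T\times P)$, Baire category reduces everything to showing that each $U_{j,k,n}$ is dense.

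Fix $\psi=\psi_j$, $k$, $n$, and a target $f_0=H_0\circ T_{\alpha_0}\circ H_0^{-1}\in\AK(T)$; such ``exactly conjugate'' elements are dense in $\AK(T)$ by definition. I construct a nearby $f_1=H_1\circ T_{p/q}\circ H_1^{-1}$ with $H_1$ $C^\infty$-close to $H_0$ and $p/q$ close to $\alpha_0$. The advantage of rational $\alpha_1=p/q$ is that $f_1^q=\mathrm{id}$: writing $S_q^{f_1}\psi:=\sum_{i=0}^{q-1}\psi\circ f_1^i$, the corrected function $\phi:=\psi-q^{-1}S_q^{f_1}\psi$ satisfies $S_q^{f_1}\phi\equiv 0$ (using $f_1$-invariance of $S_q^{f_1}\psi$), and hence $\phi=u\circ f_1-u$ for the explicit primitive
\begin{displaymath}
u=-\frac{1}{q}\sum_{i=0}^{q-1}(q-1-i)\,\phi\circ f_1^i.
\end{displaymath}
Consequently $\psi$ is $C^k$-close to a coboundary of $f_1$ precisely when $q^{-1}S_q^{f_1}\psi$ is small in $C^k$.

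The Birkhoff sum factors through the conjugation as $S_q^{f_1}\psi=\bigl(S_q^{T_{p/q}}\widetilde{\psi}\bigr)\circ H_1^{-1}$ with $\widetilde{\psi}:=\psi\circ H_1$, and because $\gcd(p,q)=1$ the iterates $\{ip/q\bmod 1\}$ equal $\{\ell/q\}$, so
\begin{displaymath}
S_q^{T_{p/q}}\widetilde{\psi}(t,x)=\sum_{\ell=0}^{q-1}\widetilde{\psi}(t+\ell/q,x)
\end{displaymath}
is a Riemann sum on the $1/q$-spaced grid for $q\int_\T\widetilde{\psi}(s,x)\,ds$. If one can arrange that the \emph{vertical mean}
\begin{displaymath}
\overline{\psi}_{H_1}(x):=\int_\T\psi(H_1(s,x))\,ds
\end{displaymath}
vanishes identically in $x\in P$, then the rapid decay of the Fourier coefficients of the smooth function $\widetilde{\psi}$ yields $\|S_q^{T_{p/q}}\widetilde{\psi}\|_{C^k}=O(q^{-r})$ for every $r\geq 0$ (only Fourier modes with frequency in $q\Z$ survive the summation), and hence $\|q^{-1}S_q^{f_1}\psi\|_{C^k}<1/n$ as soon as $q$ is large enough.

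The main obstacle is therefore the following vertical-mean approximation problem: \emph{given} $\psi\in C^\infty_\mu(M)$, $H_0\in\Hsw{\infty}(\T\times P)$, and $\epsilon>0$, \emph{find} $H_1\in\Hsw{\infty}(\T\times P)$ with $d_{C^\infty}(H_1,H_0)<\epsilon$ and $\overline{\psi}_{H_1}\equiv 0$ on $P$. Writing $H_1(s,x)=(s+\alpha_0,\gamma_1(s)\cdot x)$, the constraint becomes $\int_\T\psi(u,\gamma_1(u-\alpha_0)\,x)\,du=0$ for every $x\in P$, to be solved by a smooth perturbation $\gamma_1$ of the given loop $\gamma_0\colon\T\to G$. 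I expect the attack to proceed via representation theory: expand $\psi$ along the $G$-isotypic decomposition of $L^2(P)$ and the Fourier basis on $\T$, reducing the equation to vanishing of trigonometric integrals of matrix coefficients of $\gamma_1$, and use the flexibility of smooth loops in $G$ to kill these constraints. The two cases of Theorem~A will call for different inputs — the Mal'cev coordinates of \S\ref{sec:malcev-theory} combined with Kirillov-type harmonic analysis for nilmanifolds, Peter--Weyl theory for the compact-type case — and supplying these inputs is what the remainder of the paper is structured to do.
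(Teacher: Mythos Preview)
Your Baire-category setup and the reduction, via Lemma~\ref{lem:cobound-periodic-map}, to controlling $q^{-1}\Bs^q_{f_1}\psi$ are correct and parallel the paper. The genuine gap is in your formulation of the vertical-mean approximation problem: requiring $d_{C^\infty}(H_1,H_0)<\epsilon$ makes the constraint $\overline{\psi}_{H_1}\equiv 0$ generically unsolvable, because $H\mapsto\overline{\psi}_H$ is continuous from $\Hsw{\infty}(\T\times P)$ to $C^\infty(P)$ and $\overline{\psi}_{H_0}$ is typically nonzero. For a concrete obstruction take $P=\T$, $\gamma_0\equiv 0$, and $\psi(t,x)=\cos 2\pi x$; then $\overline{\psi}_{H_1}(x)=\mathrm{Re}\bigl(e^{2\pi ix}\int_\T e^{2\pi i\gamma_1(s)}\,ds\bigr)$, and the integral is close to $1$ whenever $\gamma_1$ is $C^0$-small, so $\overline{\psi}_{H_1}$ cannot vanish for any nearby $H_1$.

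What is missing is precisely the Anosov--Katok commutation mechanism. One does \emph{not} keep $H_1$ close to $H_0$: instead one first approximates $\alpha_0$ by a rational $p_0/q_0$, sets $H_1=H_0\circ H_{0,\gamma}$ with a possibly wild $H_{0,\gamma}$ that \emph{commutes with} $T_{1/q_0}$ (condition~(i) of Lemma~\ref{lem:mainLem}), and then lets $p_\ell/q_\ell\to p_0/q_0$. The commutation forces $H_1 T_{p_\ell/q_\ell} H_1^{-1}\to H_0 T_{p_0/q_0} H_0^{-1}$ even though $H_1$ is far from $H_0$. With this freedom the paper also bypasses your asymptotic estimate: rather than killing the vertical mean of $\psi$ and sending $q\to\infty$, it first approximates $\psi$ by $\phi\circ H_0^{-1}$ with $\phi$ in a finite-dimensional filtration space $V_n$, and then uses condition~(ii) of Lemma~\ref{lem:mainLem} to choose $H_{0,\gamma}$ and a specific $\bar q$ so that the Birkhoff sum of $\phi$ vanishes \emph{identically}. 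The harmonic-analytic input you anticipate is spent on building such an $H_{0,\gamma}$ for the finite-dimensional data $V_n$, not on perturbing $H_0$.
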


Let us now describe the general strategy to prove
Theorem~\ref{thm:mainThm-AK}: 

A family $(V_n)_{n\geq 1}$ will be called a \emph{filtration} of
$C^\infty_\mu(M)$ whenever it satisfies:
\begin{itemize}
\item for every $n\geq 1$, $V_n\subset C^\infty_\mu(M)$ is a closed
  linear subspace,
\item $V_n\subset V_{n+1}$, for every $n\geq 1$;
\item $\bigcup_{n\geq 1} V_n$ is dense in $C^\infty_\mu(M)$. 
\end{itemize}

Sections \ref{sec:proof-nilpotent-case} and
\ref{sec:proof-compact-case} are dedicated to the proof of the
following lemma, in the nilpotent and the compact cases, respectively: 

\begin{lemma}
  \label{lem:mainLem} If $P$ is as in Theorem A, then there exists a
  filtration of $C^\infty_\mu(M)$, called $(V_k)$, satisfying the
  following condition:

  For every $n\in\N$ and every $q_0\in\N$, there exists $\bar q\in\N$
  and a homogeneous skew-product $H_{0,\gamma}\in\Hsw{\infty}(\T\times
  P)$ such that:
  \begin{enumerate}[(i)]
  \item $H_{0,\gamma}\circ T_{1/q_0} = T_{1/q_0}\circ H_{0,\gamma}$;
  \item $V_n\subset B\big(H_{0,\gamma}\circ T_{\bar p/\bar q}\circ
    H_{0,\gamma}^{-1}, C^\infty(M))$, for every $\bar p\in\Z$ coprime
    with $\bar q$.
  \end{enumerate}
\end{lemma}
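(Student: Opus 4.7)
The plan is to build the filtration $(V_n)$ from a group-adapted harmonic decomposition of $C^\infty_\mu(\T\times P)$, and then, for each $V_n$ and $q_0$, to engineer a homogeneous skew-product $H_{0,\gamma}$ so that conjugation by it sends $V_n$ into the coboundary space of the periodic translation $T_{\bar p/\bar q}$. In case~(b), Peter--Weyl decomposes $L^2(G/H)=\bigoplus_{\pi\in\hat G}V_\pi^H$ into finite-dimensional $G$-invariant subspaces of smooth functions, and I take $V_n$ to be the $C^\infty_\mu$-intersection of the span of the first $n$ such blocks tensored with the Fourier modes $e^{2\pi i kt}$ for $|k|\le n$. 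In case~(a), I use the Mal'cev tower $N/\Gamma=N^{(0)}/\Gamma^{(0)}\to N^{(1)}/\Gamma^{(1)}\to\cdots\to\{*\}$, each step a circle bundle, and take truncated fiberwise Fourier series through every level. In both cases $V_n$ is finite-dimensional, nested, and $\bigcup_n V_n$ is dense in $C^\infty_\mu$.

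Next I translate~(i)--(ii) into algebraic conditions on $\gamma$. Condition~(i) is equivalent to the $(1/q_0)$-periodicity $\gamma(t+1/q_0)=\gamma(t)$, so I work with $\bar q=mq_0$ for a suitable $m\in\N$ and prescribe $\gamma$ only on the fundamental cell $[0,1/q_0)$. For~(ii), observe that $\phi\in V_n$ is a smooth coboundary of $H_{0,\gamma}T_{\bar p/\bar q}H_{0,\gamma}^{-1}$ iff $\phi\circ H_{0,\gamma}$ is one for $T_{\bar p/\bar q}$, and since the latter is periodic of period $\bar q$, this amounts to the vanishing of the Birkhoff sum over one period,
\begin{equation*}
\sum_{j=0}^{\bar q-1}\phi\bigl(t+\tfrac{j}{\bar q},\,\gamma(t+\tfrac{j}{\bar q})\cdot p\bigr)=0\qquad\forall\,(t,p)\in\T\times P,
\end{equation*}
an identity which (by the reindexing $j\mapsto j\bar p\pmod{\bar q}$, using $\gcd(\bar p,\bar q)=1$) does not depend on the specific $\bar p$ coprime with $\bar q$; so~(ii) collapses to one identity per $\phi\in V_n$.

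To solve this identity I use that $V_n$ is finite-dimensional: expanding each $\phi$ in the chosen harmonic basis converts the Birkhoff identity into finitely many linear equations on $\gamma$. I then discretize, letting $\gamma$ be essentially piecewise constant on the $\bar q$ subintervals of $\T$ of length $1/\bar q$ (periodized to respect the $(1/q_0)$-periodicity), with values $g_0,\dots,g_{\bar q-1}$ in $G$ (respectively in $N$ for case~(a)); the identity then becomes a weighted average of translates of the harmonic blocks of $\phi$ under the $g_k$. In case~(b), Peter--Weyl orthogonality shows that any family $\{g_k\}$ whose empirical average vanishes on each of the finitely many non-trivial irreducibles appearing in $V_n$ will do, and such families are easy to manufacture directly inside each finite-dimensional block. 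In case~(a), I induct on the nilpotency degree along the Mal'cev tower: at the base one has a torus translation for which Proposition~\ref{pro:one-dim-inv-dist} applies, and at each higher level the new circle-bundle Fourier modes are killed by refining $\gamma$ using the extra freedom in the fiber direction. A final mollification smooths $\gamma$ without disturbing the linear identity.

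The main obstacle lies in case~(a). The irreducible unitary representations of $N$ are typically infinite-dimensional (Kirillov theory), so there is no $N$-invariant Peter--Weyl-type decomposition of $C^\infty(N/\Gamma)$ into finite-dimensional blocks; the finite-dim structure is visible only through the Mal'cev tower. The technical heart will be coordinating the construction of $\gamma$ across layers: the harmonics at level~$i$ are killed using values of $\gamma$ lying in the fiber of $N\to N^{(i)}$, and one must ensure that adjusting $\gamma$ at deeper layers does not destroy the equations already solved above, while maintaining $(1/q_0)$-periodicity throughout. Choosing $\bar q$ as a suitable product of base- and fiber-sized integers, and controlling the error terms inductively, is where I expect the bulk of the technical work to lie.
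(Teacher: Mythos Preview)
Your high-level strategy matches the paper's: the filtration via Peter--Weyl (compact case) and Mal'cev tower (nilpotent case), the translation of (i) into $(1/q_0)$-periodicity of $\gamma$, and the reduction of (ii) to the vanishing of the period-$\bar q$ Birkhoff sum via Lemma~\ref{lem:cobound-periodic-map} are all exactly right, as is the observation that the identity is independent of $\bar p$ coprime to $\bar q$.

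The genuine gap is in the compact case, at the sentence ``A final mollification smooths $\gamma$ without disturbing the linear identity.'' This is where the real difficulty hides, and the claim as stated is false. The Birkhoff identity
\[
\sum_{j=0}^{\bar q-1}\phi\bigl(t+\tfrac{j}{\bar q},\,\gamma(t+\tfrac{j}{\bar q})\cdot p\bigr)=0
\]
must hold for \emph{every} $t\in\T$. With your piecewise-constant $\gamma$ it holds on the plateaux, but once you mollify, all $\bar q$ arguments $\gamma(t+j/\bar q)$ enter their transition zones simultaneously (the jumps are equispaced by $1/\bar q$), and there is no reason the sum stays zero along the transition. Concretely, after the $t$-Fourier reduction you correctly outline, what remains is $\sum_{b=0}^{m-1}\hat\phi_0\bigl(\gamma(t+\tfrac{b}{\bar q})\cdot p\bigr)=0$ for all $t$; this says the curve $t\mapsto\bigl(\gamma(t),\gamma(t+\tfrac{1}{\bar q}),\ldots\bigr)$ must stay inside the zero level set of $(x_1,\ldots,x_m)\mapsto\sum_b\hat\phi_0(x_b\cdot p)$, a codimension-$\dim E$ submanifold. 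A generic smoothing of a piecewise-constant path will leave that submanifold. The paper resolves this by developing the notion of an \emph{$E$-equidistributed loop} (Theorem~\ref{thm:E-equid-dist-existence}): one shows the relevant level set is a manifold (Lemma~\ref{lem:Y-N-non-empty}), that it is nonempty (a convex-cone argument, Lemma~\ref{lem:X-m-non-empty}, which is essentially your ``design'' step), and---this is the part your proposal skips---that for $m$ large enough the cyclic shift $\sigma_m$ preserves a connected component, so a smooth $\sigma_m$-equivariant loop exists (Lemma~\ref{lem:cc-Xm}). Only then does one get a smooth $\gamma$ satisfying the identity for all $t$.

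For the nilpotent case your outline is in the right spirit and matches the paper's inductive scheme along the tower $N^{(k)}$; the paper's device is to set $\gamma_k(t)=\gamma_{k-1}(t)\exp(\eta(\bar q_{k-1}t)v_k)$ for a carefully chosen bump-like $\eta$ satisfying an exponential-sum cancellation (see \eqref{eq:exp-sums-eta}), which simultaneously preserves the level-$(k-1)$ identity and kills the level-$k$ Fourier modes. Your invocation of Proposition~\ref{pro:one-dim-inv-dist} at the base is a mis-citation (that proposition concerns DUE of torus rotations, not Birkhoff sums for rational rotations); the base step is just the elementary vanishing of $\sum_{j=0}^{q_0-1}e^{2\pi i\ell j/q_0}$ for $0<|\ell|<q_0$.
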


To prove Lemma~\ref{lem:mainLem}, we shall need the following
elementary
\begin{lemma}
  \label{lem:cobound-periodic-map}
  Let $M$ be an arbitrary manifold, $f\colon M\to M$ a periodic
  $C^r$-map (\ie there exists $q\in\N$ such that $f^q=id_M$) and
  $\phi\in C^k(M)$, with $0\leq k\leq r\leq\infty$. Then, $\phi\in
  B(f,C^k(M))$ iff 
  \begin{equation}
    \label{eq:Bs-vanish-iff-cobound}
    \Bs_f^q\phi(x)=\sum_{j=0}^{q-1}\phi(f^j(x))=0, \quad\forall x\in
    M. 
  \end{equation}
\end{lemma}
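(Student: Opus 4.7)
\smallskip

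\noindent\textbf{Proof plan.} The lemma is a standard averaging argument. The forward implication is immediate telescoping: if $\phi = u\circ f - u$ with $u\in C^k(M)$, then
\begin{displaymath}
  \Bs_f^q\phi = \sum_{j=0}^{q-1}\bigl(u\circ f^{j+1}-u\circ f^j\bigr) = u\circ f^q - u = 0,
\end{displaymath}
since $f^q=\mathrm{id}_M$. No regularity is lost, so this direction is automatic.

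For the converse, the idea is to produce a transfer function $u$ by averaging partial Birkhoff sums. Assuming $\Bs_f^q\phi\equiv 0$, the natural candidate is
\begin{displaymath}
  u:=-\frac{1}{q}\sum_{j=0}^{q-1}\Bs_f^j\phi = -\frac{1}{q}\sum_{j=0}^{q-1}\sum_{i=0}^{j-1}\phi\circ f^{i}.
\end{displaymath}
Since this is a finite $\C$-linear combination of compositions of $\phi$ with iterates of the $C^r$-map $f$, we have $u\in C^k(M)$ (with $0\le k\le r$), so no regularity is lost here either.

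To verify the cohomological equation, one computes $u\circ f-u$ by index-shifting inside the inner sum: for each fixed $j$,
\begin{displaymath}
  \Bs_f^j\phi\circ f - \Bs_f^j\phi = \sum_{i=1}^{j}\phi\circ f^{i} - \sum_{i=0}^{j-1}\phi\circ f^{i} = \phi\circ f^{j}-\phi.
\end{displaymath}
Summing over $j=0,\ldots,q-1$ and using the standing hypothesis $\Bs_f^q\phi\equiv 0$, one obtains
\begin{displaymath}
  u\circ f - u = -\frac{1}{q}\sum_{j=0}^{q-1}\bigl(\phi\circ f^{j}-\phi\bigr) = -\frac{1}{q}\Bs_f^q\phi + \phi = \phi,
\end{displaymath}
which is precisely the desired coboundary representation.

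There is no serious obstacle here: the only thing to check is that the explicit averaging formula really produces a $C^k$ solution and that the telescoping identity goes through, which it does because $f^q=\mathrm{id}_M$ makes all compositions of $f$ act as permutations on orbits of length dividing $q$. The statement is dimension-free and topology-free on $M$, and the argument works uniformly in $k\in\{0,1,\ldots,\infty\}$.
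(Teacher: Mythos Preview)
Your proof is correct and essentially identical to the paper's: both directions use the same telescoping for necessity, and for sufficiency the paper defines $v:=-\tfrac{1}{q}\sum_{j=1}^{q}\Bs_f^j\phi$, which coincides with your $u=-\tfrac{1}{q}\sum_{j=0}^{q-1}\Bs_f^j\phi$ since $\Bs_f^0\phi=0$ and, by hypothesis, $\Bs_f^q\phi=0$. The verification that $u\circ f-u=\phi$ is the same index-shift computation in both.
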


\begin{proof}[Proof of Lemma~\ref{lem:cobound-periodic-map}]
  If $\phi\in B(f,C^k(M))$, then there exists $u\in C^k(M)$ such that
  $\phi=u\circ f-u$. Hence, $\Bs_f^q\phi(x)=u(f^q(x))-u(x)=0$, for
  every $x\in M$.

  Reciprocally, let us suppose \eqref{eq:Bs-vanish-iff-cobound}
  holds. Then, using a formula we learned from
  \cite{MoulinPinchonSyst}\footnote{We thank A. Navas for bringing
    this equation to our attention.}, we write
  \begin{displaymath}
    v(x):=-\frac{1}{q}\sum_{j=1}^q\Bs_f^j\phi(x),\quad \forall
    x\in M.
  \end{displaymath}
  It clearly holds $v\in C^k(M)$, and
  \begin{displaymath}
    \begin{split}
      v(f(x))-v(x)&=-\frac{1}{q}\bigg(\sum_{j=1}^q\big(
      \Bs_f^j\phi(f(x))-\Bs_f^j\phi(x)\big)\bigg) \\
      & = -\frac{1}{q}\bigg(\Bs_f^q\phi(f(x))-q\phi(x)\bigg)=\phi(x),
    \end{split}
  \end{displaymath}
  for every $x\in M$. Thus, $\phi\in B(f,C^k(M))$.
\end{proof}

Now, assuming Lemma~\ref{lem:mainLem}, we can prove
Theorem~\ref{thm:mainThm-AK}, and henceforth, Theorem A, too:

\begin{proof}[Proof of Theorem~\ref{thm:mainThm-AK}]
  Let $(\phi_m)_{m\in\N}$ be a dense sequence in $C^\infty_\mu(M)$ and
  define 
  \begin{displaymath}
    A_m:=\left\{f\in\AK(T) : \exists u\in C^\infty(M),\
      \norm{uf-u-\phi_m}_{C^m}<\frac{1}{m}\right\}.  
  \end{displaymath}
  Each set $A_m$ is clearly open in $\AK(T)$, and by
  Proposition~\ref{pro:inv-dist-vs-cobound}, it holds 
  \begin{displaymath}
    \DUE(T) = \bigcap_{m\geq 1} A_m
  \end{displaymath}
  Thus, we have to show each $A_m$ is dense in $\AK(T)$. To do that,
  consider a fixed set $A_m$, any rational number $p_0/q_0$ and an
  arbitrary homogeneous skew-product $H\in\Hsw\infty(\T\times P)$.

  Since $(V_n)_{n\geq 1}$ is a filtration, there exists $n\in\N$ and
  $\phi\in V_n$ such that
  \begin{equation}
    \label{eq:pol-aprox-phi-n}
    \norm{\phi\circ H^{-1}-\phi_m}_{C^m}<\frac{1}{m}.
  \end{equation}

  Now, invoking Lemma~\ref{lem:mainLem}, from $n$ and $q_0$ we obtain
  a natural number $\bar q$ and a homogeneous skew-product
  $H_{0,\gamma}$ satisfying (i) and (ii).  Then, for each $\ell\in\N$,
  let us define
  \begin{align*}
    \hat p_\ell&:=\bar qp_0\ell +1,\\
    \hat q_\ell&:=\bar qq_0\ell,\\
    p_\ell&:=\frac{\hat p_\ell}{\gcd(\hat p_\ell,\hat q_\ell)}, \\
    q_\ell&:=\frac{\hat q_\ell}{\gcd(\hat p_\ell,\hat q_\ell)}.
  \end{align*}
  Notice that, for each $\ell$, $p_\ell$ and $q_\ell$ are coprime,
  $q_\ell$ is multiple of $\bar q$ and
  $\frac{p_\ell}{q_\ell}\to\frac{p_0}{q_0}$, as $\ell\to+\infty$.

  Then observe that, for every $\ell\in\N$ and any $(t,x)\in\T\times
  P$ it holds:
  \begin{equation}
    \label{eq:ph-cobound-T-pell-qell}
    \begin{split}
      \Bs_{H_{0,\gamma}T_{p_\ell/q_\ell}H^{-1}_{0,\gamma}}^{q_\ell}\phi(t,x)
      & = \sum_{j=0}^{q_\ell-1}\phi\bigg(t+\frac{jp_\ell}{q_\ell},
      \gamma\Big(t+\frac{jp_\ell}{q_\ell}\Big)\gamma(t)^{-1} x\bigg)
      \\
      &= \sum_{j=0}^{q_\ell-1}\phi\bigg(t+\frac{j}{q_\ell},
      \gamma\Big(t+\frac{j}{q_\ell}\Big)\gamma(t)^{-1} x\bigg) \\
      &= \sum_{r=0}^{q_\ell/\bar q -1}\sum_{s=0}^{\bar
        q-1}\phi\bigg(t+\frac{s}{\bar q} + \frac{r}{q_\ell},
      \gamma\Big(t+\frac{s}{\bar q} + \frac{r}{q_\ell}\Big)
      \gamma(t)^{-1}x\bigg) \\
      &= 0,
    \end{split}
  \end{equation}
  where las equality is consequence of condition (ii) of
  Lemma~\ref{lem:mainLem} and
  Lemma~\ref{lem:cobound-periodic-map}. Thus, we conclude that
  $\phi\in B(H_{0,\gamma}T_{p_\ell/q_\ell}H^{-1}_{0,\gamma},
  C^\infty(M))$.

  Henceforth,
  \begin{equation}
    \label{eq:phi-H-inv-cobound}
    \phi\circ H^{-1}\in B\left(H H_{0,\gamma} T_{p_\ell/q_\ell}
      H_{0,\gamma}^{-1} H^{-1}, C^\infty(M) \right),
  \end{equation}
  for every $\ell\in\N$.

  On the other hand, $T_{p_\ell/q_\ell}\to T_{p_0/q_0}$ in
  $\Diff{}{\infty}(M)$, as $\ell\to\infty$. Hence, from (i) of
  Lemma~\ref{lem:mainLem}, we get
  \begin{equation}
    \label{eq:convergence-AK-meth}
    HH_{0,\gamma}T_{p_\ell/q_\ell}H_{0,\gamma}^{-1}H^{-1}\xrightarrow{C^\infty}
    HT_{p_0/q_0}H^{-1}, \quad\text{as }\ell\to\infty.
  \end{equation}

  Now, putting together \eqref{eq:pol-aprox-phi-n},
  \eqref{eq:phi-H-inv-cobound} and \eqref{eq:convergence-AK-meth} we
  conclude $HT_{p_0/q_0}H^{-1}\in\cl_\infty(A_m)$, as desired. 
\end{proof}

\subsection{Real-analytic DUE diffeomorphisms}
\label{sec:real-analytic-due-diff}

Before starting with the proof of Lemma~\ref{lem:mainLem}, it is
interesting to remark that using the techniques we applied in
\S\ref{sec:anosov-katok-space} it is possible to prove the existence
of real-analytic DUE diffeomorphisms on $M=\T\times P$.

In fact, for the time being let us suppose $G$ is an arbitrary Lie
group and let us write $G^\C$ for the complexification of $G$. 

Then, for each $\Delta >0$, let us define $C^\omega_\Delta(\T,G)$ as
the set of real-analytic functions $\gamma\colon\T\to G$ that admit a
holomorphic extension from the complex band $A_\Delta:=\{z\in\C :
\abs{\mathrm{Im}\; z}\leq\Delta\}/\Z$ to $G^\C$. Let us consider
$C^\omega_\Delta(\T\,G)$ endowed with the distance dunction $d_\Delta$
given by
\begin{displaymath}
  d_\Delta(\gamma_0,\gamma_1):= \sup_{z\in A_\Delta}
  d_{G^\C}(\gamma_0(z),\gamma_1(z)), \quad\forall\gamma_0,\gamma_1\in
  C^\omega_\Delta(\T,G),
\end{displaymath}
where $d_{G^\C}$ denotes a left invariant distance on $G^\C$.

Then, taking into account that, for any $\Delta>0$,
$C^\omega_\Delta(\T,G)$ is dense in $C^\infty(\T,G)$, repeating the
same argument used in the Proof of Theorem~\ref{thm:mainThm-AK}, we
can easisly show that the set 
\begin{displaymath}
  \DUE_\Delta^\omega(T):=\left\{(\alpha,\gamma)\in
    C^\omega_\Delta(\T,G) : H_{0,\gamma}\circ T_\alpha\circ
    H_{0,\gamma}^{-1}\in\Hsw\omega (\T\times P) \text{ is DUE}
  \right\} 
\end{displaymath}
is generic in $\T\times C^\omega_\Delta(\T,G)$, and in particular,
non-empty.

\section{The nilpotent case}
\label{sec:proof-nilpotent-case}

All along this section, let us assume $P$ is a compact nilmanifold
equal to $N/\Gamma$, where $N$ is a (connected) simply connected
nilpotent Lie group and $\Gamma<N$ is a uniform lattice.

\subsection{The filtration in the nilpotent case}
\label{sec:pseudo-polyn-nilm}

Observe that any complex function on $P$ can be lifted to its
universal covering, which can be identify with $N$ itself, getting a
$\Gamma$-invariant complex function\footnote{Consider the
  $\Gamma$-action on $N$ given by right translations.}. So, we can
naturally identify $C^\infty(P)$ with
\begin{displaymath}
  C_\Gamma^\infty(N):=\left\{\phi\in C^r(N) : \phi(xg)=\phi(x),\ \forall
    x\in N,\ \forall g\in\Gamma\right\}. 
\end{displaymath}
Moreover, since the exponential map $\exp\colon\n\to N$ is a
real-analytic diffeomorphism, we can identify $C^\infty(N)$ with
$C^\infty(\n)$, and henceforth, $C^\infty(P)$ with a closed linear
subspace of $C^\infty(N)=C^\infty(\n)$.

Let $\mc{V}=\{v_1,\ldots,v_d\}$ be a Mal'cev basis of $\n$ strongly
based on $\Gamma$ (see \S\ref{sec:malcev-theory} for details). Fixing
this basis, we identify $C^\infty(\n)$ with $C^\infty(\R^d)$ simply
writing
\begin{displaymath}
  \phi\left(\sum_{i=1}^dx_i v_i\right)=\phi(x_1,x_2,\ldots,x_d),
  \quad\forall (x_1,\ldots,x_d)\in\R^d.
\end{displaymath}
Thus, making some abuse of notation, we shall assume that
\begin{equation}
  \label{eq:funct-space-indent-1}
  C^\infty(P)\subset C^\infty(N)=C^\infty(\n)=C^\infty(\R^d)
\end{equation}

Now let us analyze some of the equivariant conditions a function
$\phi\in C^\infty(\R^d)$ must satisfy to belong to
$C^\infty(P)$. First, since $v_1\in Z(\n)$ (and $\exp(\Z
v_1)\in\Gamma$), we conclude that, if $\phi\in C^\infty(P)\subset
C^\infty(\R^d)$, then it is $\Z$-periodic in its first
variable. Hence, we can consider the \emph{Fourier-like} development
\begin{equation}
  \label{eq:hat-phi-1-def}
  \phi(x_1,x_2,\ldots x_d)=\sum_{k\in\Z}
  \hat\phi_k^{(1)}(x_2,\ldots,x_d)e^{2\pi i kx_1},
\end{equation} 
where each $\hat\phi_k^{(1)}\in C^\infty(\R^{d-1})$. Here, the
$0^{\mathrm{th}}$ Fourier-function $\hat\phi_0^{(1)}$ has a
particularly nice interpretation: it can be naturally considered as
defined on the nilpotent Lie group $N^{(1)}:=N/N_{(1)}$, or more
precisely, on the the compact nilmanifold $N^{(1)}/\Gamma^{(1)}$ (see
\S\ref{sec:malcev-theory} for these notations).

On the other hand, observe that the basis
$\{v_2+\n_{(1)},v_3+\n_{(1)}, \ldots, v_d+\n_{(1)}\}$ is a Mal'cev one
for $\n/\n_{(1)}$ strongly based on the lattice
$\Gamma^{(i)}=\Gamma/\Gamma_{(i)}$.

That means we can repeat our previous argument to prove that
$\hat\phi_0^{(1)}$ is $\Z$-periodic on its first variable, and hence,
we can consider the Fourier-like development
\begin{displaymath}
  \hat\phi_0^{(1)}(x_2,\ldots,x_d)=\sum_{k\in\Z}\hat\phi_k^{(2)}
  (x_3,\ldots,x_d)e^{2\pi i k x_2}. 
\end{displaymath}

Once again, the Fourier-coefficient function $\hat\phi_0^{(2)}$ can be
considered as an element of $C^\infty(N^{(2)}/\Gamma^{(2)})$ and the
set $\{v_3+\n_{(2)},\ldots,v_d+\n_{(d)}\}$ is a Mal'cev basis strongly
based on $\Gamma^{(2)}$.

By induction, we get a family of \emph{Fourier-like coefficients}
\begin{displaymath}
  \hat\phi_k^{(j)}\in C^\infty(\R^{d-j}), \quad\forall
  j\in\{1,\ldots,d\},\ \forall k\in\Z,
\end{displaymath}
where each $\hat\phi_0^{(j)}\in C^\infty(N^{(i)}/\Gamma^{(i)})\subset
C^\infty(\R^{d-j})$ and satisfies
\begin{displaymath}
  \hat\phi^{(j)}_0(x_{j+1},\ldots
  x_d)=\sum_{k\in\Z}\hat\phi^{(j+1)}_k(x_{j+2},\ldots,x_d)e^{2\pi i k
    x_{j+1}}.
\end{displaymath}

Now we proceed to define the \emph{pseudo-polynomials} on $P$: we
shall say that $\phi\in C^\infty(P)$ is a \emph{pseudo-polynomial}
(with respect to $\mc{V}$) of degree less or equal than $n\in\N$ iff
\begin{displaymath}
  \hat\phi_k^{(j)}\equiv 0, \quad\text{for every } j\in\{1,\ldots,d\}\
  \text{and } \abs{k}>n. 
\end{displaymath}
The linear space of pseudo-polynomials on $P$ will be denoted by
$\Pol(P)$ and we shall write $\Pol_n(P)$ for the subspace of
pseudo-polynomials with degree at most $n$.

Notice that $M=\T\times P$ is a compact nilmanifold itself, hence we
can talk about pseudo-polynomials on $M$. In this case, we shall add
the vector $v_0:=\partial_t$ (which generated the Lie algebra of $\R$)
to the basis $\mc V$, and hence any function $\phi\in C^\infty(M)$
will be written in coordinates $(t,x_1,\ldots,x_d)$, being $\phi$
$\Z$-periodic on its first coordinate, too. So, we can also consider
the Fourier-like development
\begin{displaymath}
  \phi(t,x_1,\ldots,x_d)=\sum_{k\in\Z}\hat\phi_k^{(0)}(x_1,\ldots,x_d)
  e^{2\pi ikt},
\end{displaymath}
with each $\hat\phi_k^{(0)}\in C^\infty(P)$. Of course, by analogy
with our definition of pseudo-polynomials on $P$, we define
\begin{displaymath} 
  \Pol_n(M):=\{\phi\in C^\infty(M) : \hat\phi_0^{(0)}\in\Pol_n(P),\
  \hat\phi_k^{(0)}\equiv 0,\ \forall \abs{k}>n\}.
\end{displaymath}

Combining an inductive argument on the dimension of $M$ with classical
Fourier theory one can easily show
\begin{proposition}
  \label{pro:pseudo-poly-density}
  The linear space 
  \begin{displaymath}
    \Pol(M)=\bigcup_{n\geq 0}\Pol_n(M)
  \end{displaymath}
  is dense in $C^\infty(M)$. In particular, this implies that the family
  $(V_n)$ given by
  \begin{displaymath}
    V_n:=\Pol_n(M)\cap C^\infty_\mu(M),\quad \forall n\geq 1
  \end{displaymath}
  is a filtration of $C^\infty_\mu(M)$.
\end{proposition}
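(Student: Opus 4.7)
The plan is to argue by induction on the length $d$ of the Mal'cev basis of $\n$, reducing the nilmanifold $P$ at each step to the lower-dimensional quotient $N^{(1)}/\Gamma^{(1)}$ via the central Fourier expansion in the $v_1$-direction, and combining this with a classical Fourier expansion in the $t$-variable of $M=\T\times P$.

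First I would prove the analogous density statement on $P$, namely that $\Pol(P)$ is dense in $C^\infty(P)$, by induction on $d$. The case $d=0$ is trivial, so assume the statement holds in dimension $d-1$ and fix $\psi\in C^\infty(P)$ together with a tolerance $\epsilon$ in some $C^m$-norm. Since $v_1\in Z(\n)$ and $\exp(\Z v_1)\subset\Gamma$, the subgroup $\exp(\R v_1)/\exp(\Z v_1)$ is a central circle acting smoothly on $P$, so the Fourier decomposition of $\psi$ in the $x_1$-direction, $\psi=\sum_{k\in\Z}\hat\psi_k^{(1)}(x_2,\ldots,x_d)e^{2\pi i kx_1}$, produces smooth summands on $P$; by smoothness of $\psi$ the tail tends to zero in $C^m$, so truncating at some level $n_1$ yields $\psi_{n_1}$ within $\epsilon/2$ of $\psi$. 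The zeroth coefficient $\hat\psi_0^{(1)}$ descends to a smooth function on the nilmanifold $N^{(1)}/\Gamma^{(1)}$ of dimension $d-1$, and by the inductive hypothesis can be $C^m$-approximated within $\epsilon/2$ by some $\rho\in\Pol_{n_2}(N^{(1)}/\Gamma^{(1)})$. Pulling $\rho$ back to $P$ via the canonical projection and inserting it in place of $\hat\psi_0^{(1)}$ inside $\psi_{n_1}$ produces a function $\tilde\psi\in C^\infty(P)$ which is $C^m$-close to $\psi$ within $\epsilon$ and lies in $\Pol_{\max(n_1,n_2)}(P)$, because its $x_1$-Fourier coefficients vanish for $\abs{k}>n_1$ and the new zeroth one is itself a pseudo-polynomial on the quotient.

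For $M=\T\times P$ I would apply the same strategy, performing a preliminary Fourier expansion in $t$: given $\phi\in C^\infty(M)$, decompose $\phi=\sum_k\hat\phi_k^{(0)}e^{2\pi i kt}$, truncate at some $n_0$, and apply the density result on $P$ to the zeroth coefficient $\hat\phi_0^{(0)}$, obtaining an approximant in $\Pol_{\max(n_0,n_1,n_2)}(M)$. The filtration claim for $(V_n)$ then follows routinely: each $\Pol_n(M)$ is the intersection of the kernels of the continuous linear functionals $\phi\mapsto\hat\phi_k^{(j)}$ for $\abs{k}>n$, hence closed in $C^\infty(M)$, and therefore $V_n$ is closed in $C^\infty_\mu(M)$; the nesting $V_n\subset V_{n+1}$ is immediate; and density of $\bigcup V_n$ in $C^\infty_\mu(M)$ follows by approximating a mean-zero $\phi$ by pseudo-polynomials and subtracting the mean of the approximant (a constant is itself a degree-$0$ pseudo-polynomial), which vanishes in the limit.

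The step requiring the most care is the invariance bookkeeping: one must check that the Fourier isotypic components in the $x_1$-direction are genuinely $\Gamma$-invariant functions on $P$ (not merely $\exp(\Z v_1)$-periodic functions on $\n$), so that both the truncation and the replacement operation make sense on $P$ itself. This is guaranteed because the central circle action commutes with the right $\Gamma$-action on $N$, which makes each isotypic component a well-defined smooth function on $P$; moreover the pullback of $\rho$ to $P$ is automatically $\Gamma$-invariant since it factors through a $\Gamma^{(1)}$-invariant function. With these identifications already in place from the discussion preceding the proposition, the inductive step goes through essentially by bookkeeping, which is why the authors describe the proof as easy.
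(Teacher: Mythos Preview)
Your proposal is correct and is precisely the argument the paper has in mind: the authors do not spell out a proof but only remark that the result follows by ``combining an inductive argument on the dimension of $M$ with classical Fourier theory,'' and your write-up is exactly that---induction on $d$ via the central $x_1$-Fourier expansion to pass from $P$ to $N^{(1)}/\Gamma^{(1)}$, preceded by a $t$-Fourier truncation to handle the extra circle factor in $M=\T\times P$. Your care about the $\Gamma$-invariance of the isotypic pieces (using that the central circle action commutes with right $\Gamma$-translation) and about closedness of $\Pol_n(M)$ fills in the only points where something could conceivably go wrong, so there is nothing to add.
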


Now we will prove Lemma~\ref{lem:mainLem} assuming the filtration
$(V_n)$ is given by Proposition~\ref{pro:pseudo-poly-density}:

\begin{proof}[Proof of Lemma~\ref{lem:mainLem} in the nilpotent case]
  Let us write $d:=\dim N$. We will recursively define, for
  $k\in\{0,1,\ldots,d\}$, two sequences $\gamma_k\in C^\infty(\T,N)$
  and $(\bar q_k)\subset\N$ satisfying the following condition: there
  exists a constant $C_k\in\R$ such that for every $p\in\Z$ coprime
  with $\bar q_k$ and every $\phi\in V_n$,
  \begin{equation}
    \label{eq:Bs-k-hypo-ind}
    \Bs_{H_k T_{p/\bar q_k}H^{-1}_k}^q\phi(t,x)=C_k
    \hat\phi_0^{(k)}(\gamma_k(t)^{-1}xN^{(k)}),\quad\forall
    (t,x)\in\T\times N,
  \end{equation}
  where $H_k=H_{0,\gamma_k}\subset\Hsw\infty(\T\times P)$ and
  considering $\hat\phi_0^{(k)}$ as a complex function on
  $N^{(k)}=N/N_{(k)}$ (see \S\ref{sec:malcev-theory} for notations).

  At this point it is important to notice that, since $\phi\in
  V_n\subset C^\infty_\mu(M)$, then
  \begin{displaymath}
    \int_P\hat\phi_0^{(k)}\dd\nu=0, \quad\text{for every }  0\leq
    k\leq d. 
  \end{displaymath}
  Thus, in particular, the complex number $\phi_0^{(d)}$ is equal to
  zero, and so, condition \eqref{eq:Bs-k-hypo-ind} for $k=d$ means
  that the Birkhoff sum vanishes. By
  Lemma~\ref{lem:cobound-periodic-map}, this is equivalent to $\phi\in
  B(H_dT_{p/\bar q_d}H_d^{-1}, C^\infty)$.

  Now let us start with the case $k=0$. Observe that without lost of
  generality we can assume $n<q_0$. Let us define $\gamma_0\equiv 1_N$
  (so, $H_0=id_M$) and $\bar q_0:=q_0$. Hence, for every $\phi\in
  V_n$, and every $p\in\Z$ coprime with $\bar q_0$, we have
  \begin{equation}
    \label{eq:Bs-k0-first-step-ind}
    \begin{split}
      \Bs_{H_0T_{p/\bar q_0}H_0^{-1}}^{\bar q_0}
      \phi(t,x)&=\sum_{j=0}^{\bar q_0-1}
      \phi\bigg( t+j\frac{p}{\bar q_0},x\bigg) \\
      & = \sum_{j=0}^{\bar q_0-1} \sum_{\abs{\ell}\leq n}
      \hat\phi_{\ell}^{(0)}(x) e^{2\pi i\ell(t+\frac{j}{\bar q_0})} =
      \bar q_0\phi_0^{(0)}(x),
    \end{split}
  \end{equation}
  for every $(t,x)\in\R\times\R^d$. So, condition
  \eqref{eq:Bs-k-hypo-ind} is verified for $k=0$.

  Now, suppose we have already defined $\gamma_{k-1}\in
  C^\infty(\T,N)$ (and then,
  $H_{k-1}=H_{0,\gamma_{k-1}}\in\Hsw\infty(\T\times G/H)$) and $\bar
  q_{k-1}\in\N$, with $1\leq k\leq d$, then let us construct
  $\gamma_k$ and $\bar q_k$.

  To do this, we start considering an auxiliary $C^\infty$-function
  $\rho\colon [0,1]\to[0,1]$ satisfying the following conditions:
  \begin{enumerate}[$(i)$]
  \item $\rho(t)=0$, for every $t$ in a neighborhood of $0$;
  \item $\rho(t)=1$, for every $t$ in a neighborhood of $1$;
  \item $\rho^\prime(t)\geq 0$, for all $t$;
  \item $\rho(1-t)=1-\rho(t)$, for all $t$.
  \end{enumerate}
  
  Then, we use $\rho$ to define a new auxiliary function
  $\eta\colon[0,1]\to\R$ as follows:
  \begin{displaymath}
    \eta(t):=
    \begin{cases}
      \rho\big(2q_0t-\floor{2q_0t}\big)+
      \frac{\floor{2q_0t}}{q_0}, &
      \text{if } t\in\big[0,\frac{1}{2}\big);\\
      \rho\big(2q_0(1-t)-\floor{2q_0(1-t)}\big)+
      \frac{\floor{2q_0(1-t)}}{q_0},& \text{if }
      t\in\big[\frac{1}{2},1\big]
    \end{cases}
  \end{displaymath}
  In this way, $\eta$ turns to be smooth and it vanishes in some
  neighborhoods of $0$ and $1$, so we can consider it as an element of
  $C^\infty(\T,\R)$. Observe that for any $m\in\Z\setminus\{0\}$ with
  $\abs{m}<q_0$ and any $t\in\T$, it holds
  \begin{equation}
    \label{eq:exp-sums-eta}
    \begin{split}
      \sum_{\ell=0}^{2q_0-1}e^{2\pi im\eta(t+\ell/2q_0)} =
      \sum_{\ell=0}^{q_0-1} \Big(e^{2\pi im(\eta(t)+\ell/q_0)} +
      e^{-2\pi im(\eta(t)+\ell/q_0)}\Big)=0.
    \end{split}
  \end{equation} 

  Then we define
  \begin{displaymath}
    \bar q_k:=2\bar q_{k-1} q_0, 
  \end{displaymath}
  and $\gamma_k\in C^\infty(\T,N)$ by
  \begin{displaymath}
    \gamma_k(t):=\gamma_{k-1}(t)\exp(\eta(\bar
    q_{k-1}t)v_k),\quad\forall t\in\T. 
  \end{displaymath}

  Assuming the inductive hypothesis, let us prove condition
  \eqref{eq:Bs-k-hypo-ind} holds for $k$, too. Let $p\in\Z$ be any
  number coprime with $\bar q_k$, $\phi\in V_n$ arbitrary and $(t,x)$
  be any point in $\T\times N$. Then we have:
  \begin{equation}
    \label{eq:Bs-induct-hypo-k+1}
    \begin{split}
      &\Bs_{H_{k}T_{p/\bar q_{k}}H_{k}^{-1}}^{\bar q_{k}} \phi(t,x) =
      \sum_{j=0}^{\bar q_{k}-1}\phi\bigg(t+\frac{j}{\bar q_k},
      \gamma_{k}\Big(t+\frac{j}{\bar
        q_k}\Big)\gamma_{k}(t)^{-1}x\bigg)
      \\
      &= \sum_{\ell=0}^{2q_0-1}\sum_{j=0}^{\bar q_{k-1}-1}
      \phi\bigg(\Big(t+\frac{\ell}{\bar q_k}\Big)+ \frac{j}{\bar
        q_{k-1}}, \gamma_{k}\bigg(\Big(t+\frac{\ell}{\bar
        q_k}\Big)+\frac{j}{\bar q_{k-1}}\bigg)\gamma_{k}(t)^{-1}x\bigg) \\
      &= C_{k-1}\sum_{\ell=0}^{2q_0-1} \hat\phi_0^{(k-1)}\bigg(
      \exp\bigg(\eta\Big(\bar q_{k-1}t+\frac{\ell}{2q_0}
      \Big)v_{k}\bigg)\gamma_{k}(t)^{-1}xN^{(k-1)}\bigg) \\
      &= C_{k-1} \sum_{\ell=0}^{2q_0-1}\hat\phi_0^{(k-1)} \bigg(\tilde
      x_k+\eta\Big(\bar q_{k-1}t+\frac{\ell}{2q_0}\Big),
      \tilde x_{k+1},\ldots,\tilde x_d\bigg) \\
      &=C_{k-1} \sum_{\ell=0}^{2q_0-1} \sum_{\abs{m}\leq n}
      \hat\phi_m^{(k)}(\tilde x_{k+1},\ldots,\tilde x_d)e^{2\pi i
        m(\tilde x_k+\eta(\bar q_{k-1}t+\ell/2q_0))} \\
      &= C_{k-1} \sum_{\abs{m}\leq n} \hat\phi_m^{(k)}(\tilde
      x_{k+1},\ldots,\tilde x_d) e^{2\pi i m\tilde x_k}
      \sum_{\ell=0}^{2q_0-1} e^{2\pi i m\eta(t+\ell/2q_0)}\\
      &= q_0C_{k-1}\hat\phi_0^{(k)}(\tilde x_{k+1},\ldots,\tilde x_d)
      = q_0C_{k-1}\hat\phi_0^{(k)}\Big(\gamma_k(t)^{-1}xN^{(k)}\Big),
    \end{split}
  \end{equation}
  where the sixth equality is consequence of \eqref{eq:exp-sums-eta}
  and where $(\tilde x_k,\tilde x_{k+1},\ldots,\tilde x_d)$ denotes
  the ``coordinates'' of the point $\gamma_{k+1}(t)^{-1}xN^{(k-1)}$ in
  the Lie algebra $\n^{(k-1)}$, \ie they satisfy the following
  equation:
  \begin{displaymath}
    \tilde x_kv_k+ \tilde x_{k+1}v_{k+1}+\ldots+
    \tilde
    x_dv_d+\n_{(k-1)}=\exp_{N^{(k-1)}}^{-1}(\gamma_{k}(t)^{-1}xN^{(k-1)}).
  \end{displaymath}

  In this way, \eqref{eq:Bs-induct-hypo-k+1} shows condition
  \eqref{eq:Bs-k-hypo-ind} holds for $k$, finishing the proof of the
  lemma.
\end{proof}

\section{The compact case}
\label{sec:proof-compact-case}

We start this section with a geometric construction which is
completely independent of the homogeneous structure of the supporting
manifold. So, for the time being, let us suppose $M$ is an arbitrary
smooth connected closed manifold and $\mu$ any Borel probability
measure on $M$.

\subsection{Equidistributed loops}
\label{sec:equid-loops}

Given a finite dimensional subspace $E\subset C^\infty_\mu(M)$, a
smooth loop $\gamma\in C^\infty(\T,M)$ is said to be
\emph{E-equidistributed} when there exists $m\in\N$ such that
\begin{equation}
  \label{eq:E-equid-dist-paths-def}
  \sum_{j=0}^{m-1}\phi\bigg(\gamma\Big(t+\frac{j}{m}\Big)\bigg)=0,
  \quad\forall t\in\T,\ \forall\phi\in E.
\end{equation}
The number $m$ will be called the \emph{period of the loop}.

\begin{theorem}
  \label{thm:E-equid-dist-existence}
  For every finite dimensional subspace $E\subset C^\infty_\mu(M)$,
  there exists a smooth $E$-equidistributed loop $\theta\in
  C^\infty(\T,M)$.
\end{theorem}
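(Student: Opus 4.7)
The plan is to recast the statement as a constrained-path problem on the product manifold $M^m$, and then to construct the required loop by combining a convex-hull (Carath\'eodory-type) argument with an implicit-function-theorem correction. Fix a basis $\phi_1,\ldots,\phi_N$ of $E$ and an integer $m$ to be chosen later. Consider the smooth map $F\colon M^m\to\R^N$ given by $F(x_1,\ldots,x_m)=\big(\sum_j\phi_i(x_j)\big)_{i=1}^N$, set $Z:=F^{-1}(0)\subset M^m$, and let $S\colon M^m\to M^m$ be the cyclic shift $(x_1,\ldots,x_m)\mapsto(x_2,\ldots,x_m,x_1)$. Since $F$ is $S$-invariant, so is $Z$. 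Any smooth $\theta\in C^\infty(\T,M)$ has the \emph{$m$-diagonal lift} $\tilde\theta(t):=(\theta(t),\theta(t+1/m),\ldots,\theta(t+(m-1)/m))$, and the identity $\tilde\theta(t+1/m)=S\tilde\theta(t)$ holds automatically; the $E$-equidistribution condition \eqref{eq:E-equid-dist-paths-def} with period $m$ is then equivalent to $\tilde\theta(\T)\subset Z$. Conversely, any smooth $S$-equivariant map $\tilde\theta\colon\T\to Z$ determines $\theta$ as its first coordinate, and constructing such a $\tilde\theta$ reduces to producing a smooth path $\alpha\colon[0,1/m]\to Z$ with $\alpha(1/m)=S\alpha(0)$ that is flat at both endpoints (all derivatives vanish), so that the extension $t+k/m\mapsto S^k\alpha(t)$ is $C^\infty$ on $\T$.

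For $m$ large I first produce a point of $Z$. Since each $\phi_i$ has $\mu$-mean zero, the origin of $\R^N$ lies in the convex hull of $\Phi(M)$, where $\Phi:=(\phi_1,\ldots,\phi_N)$; by Carath\'eodory's theorem, $0=\sum_{k=1}^{N+1}\lambda_k\Phi(y_k)$ for some $y_k\in M$ and non-negative $\lambda_k$ summing to $1$. Approximating the $\lambda_k$ by rationals with common denominator $m$, and correcting the residual by a local perturbation of one of the $y_k$'s at a point where $d\Phi$ has full rank $N$, yields an exact zero-sum configuration $x=(x_1,\ldots,x_m)\in Z$. The same rank condition shows $dF=\bigoplus_j d\Phi|_{x_j}$ is surjective at $x$, so $Z$ is a smooth submanifold of codimension $N$ near $x$. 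To connect $x$ to $Sx$ by a smooth path inside $Z$, I move the coordinates one at a time from $x_j$ to $x_{j+1}$ along smooth curves in the connected manifold $M$, correcting at each intermediate time by adjusting $N$ auxiliary coordinates via the implicit function theorem so as to stay on $Z$; the ``spare'' dimensions $m\dim M-N$ grow with $m$ and provide ample room. A smooth reparametrization of $\alpha$ that is flat at the endpoints finally produces the required $S$-equivariant smooth loop $\tilde\theta$, whose first coordinate is the desired $\theta$.

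The main obstacle is making the implicit-function correction consistent along the \emph{entire} reference path in $M^m$: one needs a uniform lower bound on the smallest singular value of $dF$ throughout, which forces the reference path to avoid the singular locus of $Z$. This is realistic because $M$ is connected and because the singular locus has large codimension in $M^m$ when $m$ is large, but verifying it cleanly requires some care in choosing the reference path generically. A secondary technicality is the residue-killing step in the Carath\'eodory argument, where the full-rank hypothesis on $d\Phi$ at some point must be established; this follows from the linear independence of $\{\phi_i\}$ together with the fact that a basis of smooth functions has differentials generically in general position on the connected manifold $M$.
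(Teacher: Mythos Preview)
Your framework is exactly the paper's: reduce to finding a $\sigma_m$-equivariant loop in $Z=(\Phi^{(m)})^{-1}(0)\subset M^m$, produce a regular point of $Z$ by a convex-combination argument, then join it to its shift inside $Z$. Two steps, however, do not go through as written. The claim that $d\Phi$ has rank $N$ at some single point of $M$ is false whenever $N>\dim M$, and can fail even when $N\le\dim M$ (take $M=\T^2$ and $\phi_1,\phi_2$ depending only on the first coordinate; then $d\Phi$ has rank at most $1$ everywhere). What linear independence together with the mean-zero hypothesis does yield is that the images $d\Phi_x(T_xM)$, as $x$ ranges over $M$, jointly span $\R^N$; hence $d\Phi^{(m)}$ is surjective somewhere on $M^m$ once $m\ge N$. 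The paper proves precisely this (its set $Y^{(N)}$) and then builds the regular zero by first fixing a point of $Y^{(N)}$ and cancelling its $\Phi$-value with a positive rational combination coming from the cone $C_\Phi=\R^N$. Your Carath\'eodory correction must therefore perturb a whole block of coordinates, not a single one.

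For the path from $x$ to $Sx$, moving one coordinate through an $O(1)$ displacement in $M$ forces the auxiliary coordinates to absorb an $O(1)$ change in $\Phi$; nothing guarantees they remain where the implicit-function correction is valid, and the large-codimension remark about the singular locus of $Z\subset M^m$ does not help, since the relevant constraint is on the auxiliary tuple in $M^N$. The paper's device is to replace a base point $\bar z\in X^{(m_0)}$ by its $q$-fold replication $\bar z^{(q)}\in X^{(qm_0)}$: then $m_0$ designated coordinates execute the cyclic move along arbitrary paths $\alpha_i$ in $M$, while each of the remaining $q-1$ blocks of $m_0$ coordinates makes a correction of size $O(1/q)$ along a path $\beta$ in $M^{m_0}$. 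For $q$ large, $\beta$ stays in a fixed submersion neighbourhood $U\subset Y^{(m_0)}$ of $\bar z$, so the assembled path lies in $X^{(qm_0)}$ throughout. This replication trick is the missing idea that turns your sketch into a proof.
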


\begin{proof}
  Let $N:=\dim E$ and $(\phi_1)_{i=1}^N$ be a basis of $E$, and define
  $\Phi\colon M\to\R^N$ by $\Phi(x):=(\phi_1(x),\ldots,\phi_N(x))$,
  for every $x\in M$. For each $m\in M$, let us write
  \begin{displaymath}
    \Phi^{(m)}(x_1,\ldots,x_m):=\sum_{j=1}^m\Phi(x_j)\in\R^N, \quad\forall
    (x_1,\ldots,x_m)\in M^m.
  \end{displaymath}

  Let us consider the sets $Y^{(m)},Z^{(m)}\subset M^m$ given by
  \begin{align}
    \label{eq:Y-m-def}
    Y^{(m)}&:=\left\{\bar x\in M^m : D\Phi_m(\bar x)\colon T_{\bar
        x}M^m\to\R^N \text{ is
        surjective}\right\}, \\
    \label{eq:Z-m-def}
    Z^{(m)}&:=\left\{\bar x\in M^m : \Phi_m(\bar x)=0\in\R^N\right\},
  \end{align}
  and then define $X^{(m)}:=Y^{(m)}\cap Z^{(m)}$.

  We divide the rest of the proof in several lemmas:
  
  \begin{lemma}
    \label{lem:Y-N-non-empty}
    For every $n\geq N$, the set $Y^{(n)}$ is non-empty.
  \end{lemma}

  \begin{proof}[Proof of Lemma~\ref{lem:Y-N-non-empty}]
    First observe that, given any $n\geq 1$,
    \begin{equation}
      \label{eq:DPhi-n}
      {D\Phi^{(n)}}_{(x_1,\ldots,x_n)}(v_1,\ldots,v_n)=\sum_{j=1}^n
      D\Phi_{x_j}(v_j),
    \end{equation}
    for every $(x_1,\ldots,x_n)\in M^n$ and every $(v_1,\ldots,v_n)\in
    T_{(x_1,\ldots,x_n)}M^n$. This lets us affirm that for any
    $k,n\in\N$ and any \emph{``forget-some-coordinate''} projection
    $\mathrm{pr}\colon M^{n+k}\to M^n$, it holds
    \begin{equation}
      \label{eq:pr-inv-Y-n}
      \mathrm{pr}^{-1}(Y^{(n)})\subset Y^{(n+k)}.
    \end{equation}
   
    That means it is enough to show $Y^{(N)}$ is non-empty. Reasoning
    by contradiction, suppose this is not the case. By
    \eqref{eq:DPhi-n}, this implies the set
    \begin{displaymath}
      \{D\Phi_{x}(v) : x\in M,\ v\in T_{x}M\}
    \end{displaymath}
    is contained in a proper linear sub-space of $\R^N$, and
    therefore, we can find a non-identically zero linear functional
    $\mc{L}\colon\R^N\to\R$ such that $D(\mc{L}\circ\Phi)_x=0$, for
    every $x\in M$. Of course, since we are assuming $M$ is connected,
    this implies $\mc{L}\circ\Phi\colon M\to\R^N$ is a constant
    function. Since the coordinate functions of $\Phi$ (\ie functions
    $\phi_1,\ldots,\phi_N$) have zero integral with respect to $\mu$,
    we conclude that $\mc{L}\circ\Phi\equiv 0$, contradicting the
    linear independence of the set $(\phi_i)_{i=1}^N$. So,
    $Y^{(N)}\neq\emptyset$, and by \eqref{eq:pr-inv-Y-n}, we get
    $Y^{(n)}\neq\emptyset$, for every $n\geq N$.
  \end{proof}

  \begin{lemma}
    \label{lem:X-m-non-empty}
    There exists $m\in\N$ such that $X^{(m)}$ is non-empty.
  \end{lemma}
  \begin{proof}[Proof of Lemma~\ref{lem:X-m-non-empty}]
    Consider the set
    \begin{displaymath}
      C_\Phi :=\bigcup_{n\geq 1}
      \bigg\{\sum_{j=1}^n\lambda_j\Phi(x_j)\in\R^N : x_j\in M,\
      \lambda_j>0,\ \forall j\in\{1,\ldots,n\}\bigg\}.
    \end{displaymath}
    Observe $C_\Phi$ is a convex cone in $\R^N$. We claim
    $C_\Phi=\R^N$. In fact, if this would not be the case, then there
    should exist a non-null linear functional $\mc{L}\colon\R^N\to\R$
    such that $\mc{L}(y)\geq 0$ for every $y\in C_\Phi\subset\R^N$
    and, in particular, $\mc{L}(\Phi(x))\geq 0$, for every $x\in M$.
    But since the coordinate functions of $\Phi$ belong to
    $C^\infty_\mu(M)$, it holds
    \begin{displaymath}
      \int_M\mc{L}(\Phi(x))\dd\mu=0.
    \end{displaymath}
    Hence, $\mc{L}\circ\Phi$ should be identically equal to zero,
    contradicting the linear independence of the coordinate functions
    $(\phi_i)_{i=1}^N$. Thus, $C_\Phi=\R^N$.

    By Lemma~\ref{lem:Y-N-non-empty}, $Y^{(N)}$ is non-empty, so we
    can consider an arbitrary point $(z_1,\ldots,z_N)\in Y^{(N)}$. By
    our previous assertion about $C_\Phi$, there exist $n\in\N$,
    positive numbers $\lambda_1,\ldots,\lambda_n$ and points
    $x_1,\ldots,x_n\in M$ such that
    \begin{equation}
      \label{eq:lambda-j}
      \sum_{j=1}^n\lambda_j\Phi(x_j) = -\Phi_N(z_1,\ldots,z_N)=
      -\sum_{j=1}^N\Phi(z_j). 
    \end{equation}

    Now, since $\Phi_N(Y^{(N)})$ is open in $\R^N$, we can assume (up
    to an arbitrary small perturbation of the points $z_1,\ldots,z_N$)
    that each $\lambda_j\in\Q$, and hence we can find
    $p_1,\ldots,p_n,q\in\N$ such that $\lambda_j=p_j/q$, for each
    $1\leq j\leq n$. Now, we define $m:=qN+\sum_{1\leq j\leq n} p_j$
    and we claim $X^{(m)}\neq\emptyset$. In fact, if we define
    $(w_1,\ldots,w_m)\in M^m$ by
    \begin{displaymath}
      w_j:=
      \begin{cases}
        z_{\ceil{j/q}}, & \text{if } 1\leq j\leq qN,\\
        x_k, & \text{if } qN< j\leq qN+\sum_{\ell=1}^kp_\ell,
      \end{cases}
    \end{displaymath}
    from \eqref{eq:lambda-j} we easily conclude $(w_1,\ldots,w_m)\in
    X^{(m)}$. 
  \end{proof}

  Now, for each $n\geq 2$, let us consider the diffeomorphism
  $\sigma_n\colon M^n\to M^n$ given by
  \begin{displaymath}
    \sigma_n(x_1,x_2,\ldots,x_n):=(x_2,\ldots,x_n,x_1),\quad \forall
    (x_1,\ldots,x_n)\in M^n.
  \end{displaymath}

  And we shall prove our last
  \begin{lemma}
    \label{lem:cc-Xm}
    There exist $m\geq 1$ and $\bar z\in X^{(m)}$ such that
    $\sigma_m(\bar z)\in\cc(X^{(m)},\bar z)$.
  \end{lemma}

  \begin{proof}[Proof of Lemma~\ref{lem:cc-Xm}]
    Let $m_0$ be a natural number such that $X^{(m_0)}$ is non-empty,
    and let $\bar z=(x_1,\ldots,x_{m_0})$ be any point in
    $X^{(m_0)}$. For each $q\in\N$, let us consider the point $\bar
    z^{(q)}=(z^{(q)}_1,z^{(q)}_2,\ldots,z^{(q)}_{qm_0})\in X^{(qm_0)}$
    given by
    \begin{displaymath}
      z^{(q)}_j:= x_{\ceil{j/q}}, \quad\text{for } 1\leq j\leq qm_0.
    \end{displaymath}
    
    We claim that $\sigma_{qm_0}(\bar z^{(q)})\in\cc(X^{(qm_0)},\bar
    z^{(q)})$, provided $q$ is sufficiently large. To prove this, we
    shall construct a continuous curve $\rho\colon [0,1]\to
    X^{(qm_0)}\subset M^{qm_0}$, with $\rho(0)=\bar z^{(q)}$ and
    $\rho(1)=\sigma_{qm_0}(\bar z^{(qm_0)})$.

    For each $1\leq j\leq qm_0$, we write $\rho_j\colon[0,1]\to M$
    for the $j^{\mathrm{th}}$-coordinate function of $\rho$, \ie
    $\rho(t)=(\rho_1(t),\rho_2(t), \ldots,\rho_{qm_0}(t))\in
    M^{qm_0}$.

    We start defining $\rho$ on the interval $[0,1/2]$. To do that,
    fitst let us consider continuous paths
    $\alpha=(\alpha_1,\ldots,\alpha_{m_0})\colon[0,1]\to M^{m_0}$ such
    that
    \begin{align*}
      \alpha_i(0)&=x_i,&\quad\alpha_i(1)&=x_{i+1},&\quad\text{for } 1\leq
      i<m_0, \\
      \alpha_{m_0}(0)&=x_{m_0},&\quad\alpha_{m_0}(1)&=x_1.&
    \end{align*}

    Now, we choose a (small) neighborhood $U$ of $\bar z$ in $M^{m_0}$
    such that $U\subset Y^{(m_0)}$ and $U\cap X^{(m_0)}$ is
    connected. Since $\Phi^{(m_0)}$ is a submersion on $Y^{(m_0)}$, we
    can find a continuous path $\beta\colon[0,1]\to U\subset
    M^{m_0}$ satisfying $\beta(0)=\bar z$ and
    \begin{equation}
      \label{eq:beta-path-def}
      \Phi^{(m_0)}\big(\beta(t)\big)=-\frac{\Phi^{(m_0)}\big(\alpha(t))}{q},
      \quad\forall t\in [0,1],
    \end{equation}
    provided $q$ is sufficiently large. Notice that, since
    $\alpha(1)\in X^{(m_0)}$, then $\beta(1)$ also belongs to
    $X^{(m_0)}$ itself.

    Then, we define each coordinate function of the path $\rho$ on
    $[0,1/2]$ by
    \begin{displaymath}
      \rho_j(t):=
      \begin{cases}
        \beta_{\ceil{j/q}}(2t),& \text{if } 1\leq j< m_0q,\text{ and
        }j\not\in q\Z \\
        \alpha_{j/q}(2t),& \text{if } j\in\{q,2q,\ldots,m_0q\},
      \end{cases}
    \end{displaymath}
    and every $t\in[0,1/2]$.  Notice that, as a consequence of
    \eqref{eq:beta-path-def}, $\rho(t)\in X^{(qm_0)}$, for every
    $t\in[0,1/2]$.

    In order to define path $\rho$ on $[1/2,1]$, let us consider a
    continuous path $\gamma\colon[0,1]\to X^{(m_0)}$ joining
    $\beta(1)$ to $\bar z$. Such a path $\gamma$ does exist because
    both points $\beta(1)$ and $\bar z$ belong to $U\cap X^{(m_0)}$,
    which is a connected open set of the smooth manifold $X^{(m_0)}$,
    and hence, it is arc-wise connected.

    Finally, we define $\rho$ on $[1/2,1]$ by
    \begin{displaymath}
      \rho_j(t):=
      \begin{cases}
        \gamma_{\ceil{j/q}}(2t-1),& \text{if } j\not\in q\Z\\
        x_{j+1},& \text{if } j\in q\Z\text{ and } 1\leq j<qm_0, \\
        x_1,& j=qm_0.
      \end{cases}
    \end{displaymath}
    In this way, $\rho$ is clearly a continuous path contained in
    $X^{(qm_0)}$ and joins $\rho(0)=\bar z$ to
    $\rho(1)=\sigma_{qm_0}(\bar z)$, as desired.
  \end{proof}

  Finally, let $m$ and $\bar z\in X^{(m)}$ as in
  Lemma~\ref{lem:cc-Xm}. Since $X^{(m)}\subset M^m$ is a
  $\sigma_m$-invariant embedded submanifold, and $\sigma_m$ is an
  $m$-periodic diffeomorphism, we can find a smooth loop
  $\tilde\theta\in C^\infty(\T,X^{(m)})$ satisfying
  \begin{equation}
    \label{eq:theta-0-propert}
    \tilde\theta\bigg(t+\frac{1}{m}\bigg)=\sigma_m(\tilde\theta(t)),\quad 
    \forall t\in\T.
  \end{equation}
  Then, for any $t\in\T$, if we write
  $\theta(t)=(\theta_1(t),\ldots,\theta_m(t))\in M^m$, it holds
  \begin{displaymath}
    \R^N\ni 0=\Phi^{(m)}(\theta_1(t),\ldots,\theta_m(t))=
    \sum_{j=1}^m\Phi(\theta_j(t)) =
    \sum_{j=1}^m\Phi\bigg(\theta_1\Big(t+\frac{j}{m}\Big)\bigg), 
  \end{displaymath}
  where last equality is consequence of
  \eqref{eq:theta-0-propert}. 

  Thus, $\theta=\theta_1$ is a smooth $E$-equidistributed loop, as
  desired.
\end{proof}

\subsection{The filtration in the compact case}
\label{sec:choosing-filt-compact}

In this section we construct the filtration of $C^\infty_\mu(\T\times
P)$ in order to prove Lemma~\ref{lem:mainLem}.

To do this, we return to our homogeneous setting, assuming $G$ is a
compact (connected) Lie group, $H<G$ a closed subgroup, $P=G/H$ and
$M=\T\times P$. For the sake of simplicity of the exposition, we start
assuming $H=\{1_G\}$. The general case will be easily gotten from this
particular one.

If $\nu_G$ denotes the Haar (probability) measure on $G$, there are
two unitary representations of $G$ on $L^2_0(G,\nu_G):=\{\phi\in
L^2(G,\nu_G) : \int\phi\nu_G=0\}$ given by
\begin{equation}
  \begin{split}
    (L_g\phi)(x)&:=\phi(g^{-1}x),\\
    (R_g\phi)(x)&:=\phi(xg), \quad\forall g,x\in G,\ \forall\phi\in
    L^2_0(G,\nu_G).
  \end{split} 
\end{equation}

By the classical Peter-Weyl theorem, we know left action $L$
decomposes in a direct sum of finite-dimensional irreducible
sub-representations, \ie there exists a family $(E_n)_{n\geq 1}$ of
finite-dimension subspaces of $L^2_0(G,\nu_G)$ such that
$\bigoplus_{n} E_n$ is dense in $L^2_0(G,\nu_G)$ and each $E_n$ is
$L$-invariant, with no proper $L$-invariant subspace contained in
$E_n$. Moreover, these spaces satisfy $E_n\subset
C^\infty_{\nu_G}(G)=C^\infty(G)\cap L^2_0(G,\nu_G)$, for every $n\geq
1$ and they are also $R$-invariant (for instance, see \S 3.3 in
\cite{SepanskiCompLieGr} for details).

In particular, this implies that, if $\gamma\colon\T\to G$ an
$E_n$-equidistributed with period $m$, then
\begin{equation}
  \label{eq:En-equidist-loops}
  \sum_{j=0}^{m-1}\phi\bigg(\gamma\Big(t+\frac{j}{m}\Big)x\bigg) =
  \sum_{j=0}^{m-1}(R_x
  \phi)\bigg(\gamma\Big(t+\frac{j}{m}\Big)\bigg)=0, 
\end{equation}
for every $t\in\T$, every $x\in G$ and any $\phi\in E_n$.

Now, for each $\phi\in C^\infty(M)$ and every $k\in\Z$, we define
$\hat\phi_k\in C^\infty(G)$ by
\begin{equation}
  \label{eq:hat-phi-k-def-compact}
  \hat\phi_k(x):=\int_\T\phi(t,x)e^{-2\pi ikt}\dd t,\quad\forall x\in
  G,
\end{equation}
and
\begin{equation}
  \label{eq:filtration-def-compact-case}
  V_n:=\left\{\phi\in C^\infty_\mu(M) : \hat\phi_0\in\bigoplus_{j\leq
      n} E_j,\ \hat\phi_k\equiv 0,\ \forall \abs{k}>n\right\}.  
\end{equation}

By Peter-Weyl theorem and classical Fourier series arguments we have
\begin{lemma}
  \label{lem:V_l-filtration-compact-case}
  The family $(V_n)$ given by \eqref{eq:filtration-def-compact-case}
  is a filtration for $C^\infty_\mu(M)$.
\end{lemma}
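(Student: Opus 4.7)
The plan is to verify the three defining properties of a filtration one at a time, with the density condition being the main content.

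The inclusion $V_n \subset V_{n+1}$ is immediate from the definition, since $\bigoplus_{j\leq n} E_j \subset \bigoplus_{j\leq n+1}E_j$ and the vanishing constraint on $\hat\phi_k$ for $|k|>n$ is stronger than the one for $|k|>n+1$. For closedness, I would first observe that for each $k \in \Z$, the map $\phi \mapsto \hat\phi_k$ defined by \eqref{eq:hat-phi-k-def-compact} is a continuous linear map from $C^\infty(M)$ to $C^\infty(G)$, since integration in the $t$-variable commutes with differentiation in $x$ and controls every $C^m$-seminorm. The subspace $\bigoplus_{j\leq n} E_j \subset C^\infty(G)$ is finite-dimensional, hence closed, and $\{0\} \subset C^\infty(G)$ is of course closed, so $V_n$ is an intersection of preimages of closed sets under continuous maps, intersected with the closed subspace $C^\infty_\mu(M)$; linearity is also immediate.

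For density, given an arbitrary $\phi \in C^\infty_\mu(M)$, the plan is to approximate $\phi$ in two independent stages. First, expand $\phi$ as a Fourier series in $t$:
\begin{displaymath}
  \phi(t,x) = \sum_{k\in\Z} \hat\phi_k(x) e^{2\pi i k t}.
\end{displaymath}
Since $\phi$ is smooth, the coefficients $\hat\phi_k$ decay in $k$ faster than any polynomial in every $C^m$-seminorm on $C^\infty(G)$, so the partial sums $S_N(\phi)(t,x):=\sum_{|k|\leq N} \hat\phi_k(x)e^{2\pi i kt}$ converge to $\phi$ in $C^\infty(M)$. Next, the assumption $\phi \in C^\infty_\mu(M)$ with $\mu = \Leb_1\otimes\nu_G$ translates by Fubini into $\int_G \hat\phi_0\,\mathrm{d}\nu_G = 0$, so $\hat\phi_0\in C^\infty_{\nu_G}(G)$. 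By the smooth version of the Peter--Weyl theorem, the algebraic sum $\bigoplus_{j\geq 1} E_j$ is dense in $C^\infty_{\nu_G}(G)$ in the $C^\infty$-topology, hence there exist $g_N \in \bigoplus_{j\leq N} E_j$ with $g_N \to \hat\phi_0$ in $C^\infty(G)$.

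Combining these, I would define
\begin{displaymath}
  \psi_N(t,x) := g_N(x) + \sum_{0<|k|\leq N} \hat\phi_k(x) e^{2\pi i k t},
\end{displaymath}
which satisfies $\hat{\psi_N}_0 = g_N \in \bigoplus_{j\leq N} E_j$ and $\hat{\psi_N}_k \equiv 0$ for $|k|>N$, and has zero $\mu$-integral; thus $\psi_N \in V_N$. The difference $\psi_N - \phi = (g_N-\hat\phi_0) + (\phi - S_N(\phi))$ tends to zero in $C^\infty(M)$ by the two approximations above, so $\bigcup_n V_n$ is dense in $C^\infty_\mu(M)$. The only slightly subtle step is the smooth Peter--Weyl density, which is standard and can be obtained for instance by noting that each $E_j$ is an eigenspace of the bi-invariant Laplacian and that smoothness of $\hat\phi_0$ translates into rapid decay of its Peter--Weyl components. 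The general case $P = G/H$ follows verbatim after replacing $L^2_0(G,\nu_G)$ by its subspace of right-$H$-invariant functions, which is preserved by $L$ and inherits the irreducible decomposition.
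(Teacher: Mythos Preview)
Your proof is correct and follows precisely the approach the paper indicates: the paper does not give a detailed argument here, merely stating that the lemma follows ``by Peter--Weyl theorem and classical Fourier series arguments,'' and your two-stage approximation (truncate the Fourier series in $t$, then approximate $\hat\phi_0$ via Peter--Weyl) is exactly that. There is a harmless sign slip in your displayed decomposition of $\psi_N-\phi$ (it should read $(g_N-\hat\phi_0)+(S_N(\phi)-\phi)$), but this does not affect the conclusion.
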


\begin{proof}[Proof of Lemma~\ref{lem:mainLem} in the compact case]
  Let us consider the filtration $(V_j)_{j\geq 1}$ given by
  \eqref{eq:filtration-def-compact-case}. As we did in the nilpotent
  case, without lost of generality we can assume $n<q_0$.

  Let $\tilde\gamma\in C^\infty(\T,G)$ be a $\left(\bigoplus_{j\leq
      k}E_j\right)$-equidistributed loop in $G$, and let $\tilde m$ be
  its period. Then let us define $\gamma\colon\T\to G$ by
  $\gamma(t):=\tilde\gamma(q_0t)$ and write $\bar q :=q_0\tilde
  m$. Notice $H_{0,\gamma}\in\Hsw\infty(\T\times G)$ clearly commutes
  with $T_{1/q_0}$. Let us show that condition (ii) of
  Lemma~\ref{lem:mainLem} also holds.

  To do that, let $p$ be any integer coprime with $\bar q$ and let us
  consider an arbitrary $\phi\in V_n$. Once again we consider the
  Fourier-like development of $\phi$:
  \begin{displaymath}
    \phi(t,x)=\sum_{\abs{\ell}\leq n}\hat\phi_\ell(x)e^{2\pi i\ell t},
    \quad\forall (t,x)\in\T\times G,
  \end{displaymath}
  where each $\hat\phi_\ell\in C^\infty(G)$ is given by
  \eqref{eq:hat-phi-k-def-compact} and $\hat\phi_0\in\bigoplus_{j\leq
    n}E_j$.

  Then we have,
  \begin{equation}
    \label{eq:Vn-cobound-compact-case}
    \begin{split}
      \Bs_{H_{0,\gamma}T_{p/\bar q}H_{0,\gamma}^{-1}}^{\bar
        q}&\phi(t,x) =\sum_{j=0}^{\bar q-1} \phi\bigg(t+\frac{j}{\bar
        q},
      \gamma\Big(t+\frac{j}{\bar q}\Big)x\bigg)\\
      &=\sum_{j=0}^{\bar q-1}\phi\bigg(t+\frac{j}{q}, \tilde\gamma
      \Big(q_0t+\frac{j}{\tilde m}\Big)x\bigg) \\
      &= \sum_{j=0}^{\bar q-1}\sum_{\abs{\ell}\leq
        n}\hat\phi_\ell\bigg(\tilde\gamma \Big(q_0t+\frac{j}{\tilde
        m}\Big)x\bigg)e^{2\pi i\ell (t+\frac{j}{\bar q})} \\
      &= \sum_{j=0}^{q_0-1}\sum_{k=0}^{\tilde m-1}
      \sum_{\abs{\ell}\leq n} \hat\phi_\ell\bigg(\tilde\gamma
      \Big(q_0t+\frac{k}{\tilde m}\Big)x\bigg) e^{2\pi i\ell
        (t+\frac{j}{q_0}+\frac{k}{\tilde m})}\\
      &=\sum_{\abs{\ell}\leq n}e^{2\pi i\ell t}\sum_{k=0}^{\tilde m-1}
      \hat\phi_\ell\bigg(\tilde\gamma \Big(q_0t+\frac{k}{\tilde m}
      \Big)x\bigg)e^{2\pi i\ell\frac{k}{\tilde m}}
      \sum_{j=0}^{q_0-1}e^{2\pi i\ell\frac{j}{q_0}} \\
      & = q_0\sum_{k=0}^{\tilde m-1}\hat\phi_0\bigg(\tilde\gamma
      \Big(q_0t+\frac{k}{\tilde m}\Big)x\bigg) \\
      &= q_0\sum_{k=0}^{\tilde m-1}R_x\hat\phi_0\bigg(\tilde\gamma
      \Big(q_0t+\frac{k}{\tilde m}\Big)\bigg) =0
    \end{split}
  \end{equation} 
  for every $t\in\T$ and every $x\in G$, and where the last equality
  is a consequence \eqref{eq:En-equidist-loops} and invariance by $R_x$ of $\bigoplus_{j\leq
    n}E_j$.

  Thus, by Lemma~\ref{lem:cobound-periodic-map},  it follows from
  \eqref{eq:Vn-cobound-compact-case} that $V_n\subset
  B(H_{0,\gamma}T_{p/q}H_{0,\gamma}^{-1})$, as desired.
\end{proof}

\subsection{The case $H\neq\{1_G\}$ }
\label{sec:H-non-trivial}

Now, let us suppose $H<G$ is a proper closed subgroup. Since
$G$ and $H$ are both compact, they admit unique Haar probability
measures, which will be denoted by $\nu_G$ and $\nu_H$,
respectively. The Haar measure on $G/H$ will be simply denoted by
$\nu$.

We will write $\pi_H\colon G\to G/H$ for the canonical projection and
we can define the linear operator $\Pi_H\colon C^\infty(G)\to
C^\infty(G/H)$ by 
\begin{displaymath}
  \Pi_H\psi(gH):=\int_H\psi(gx)\dd\nu_H(x),\quad\forall \psi\in
  C^\infty(G),
\end{displaymath}

Let us remark that $\Pi_H$ is continuous, closed and surjective (in
fact, the pull-back by $\pi_H$ is a section of $\Pi_H$) and satisfies
$\Pi_H(C^\infty_{\nu_G}(G))=C^\infty_\nu(G/H)$. In particular, the
family $(\Pi_H(E_j))_{j\geq 1}$, where spaces $E_j$ are defined as in
\S\ref{sec:choosing-filt-compact}, turns to be a filtration of
$C^\infty_\nu(G/H)$, where each $\Pi_H(E_j)$ has finite dimension.

Then, we have the following 
\begin{lemma}
  \label{lem:E-equid-on-G/H}
  If $\gamma\in C^\infty(\T,G)$ is an $E_k$-equidistributed loop (with
  $k\in\N$ arbitrary), then $\pi_H\circ\gamma$ is a
  $\Pi_H(E_k)$-equidistributed loop on $G/H$.
\end{lemma}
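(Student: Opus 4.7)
The plan is to unpack the definitions and push an $E_k$-equidistribution identity through the averaging operator $\Pi_H$, using the right-invariance of $E_k$.

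Let $m\in\N$ be the period of $\gamma$ as an $E_k$-equidistributed loop, so that
\begin{equation*}
  \sum_{j=0}^{m-1}\phi\bigg(\gamma\Big(t+\frac{j}{m}\Big)\bigg)=0,\quad\forall t\in\T,\ \forall\phi\in E_k.
\end{equation*}
I would claim that $m$ is also a period witnessing the $\Pi_H(E_k)$-equidistribution of $\pi_H\circ\gamma$. To see this, pick an arbitrary $\psi\in\Pi_H(E_k)$, so $\psi=\Pi_H\phi$ for some $\phi\in E_k$. By the definition of $\Pi_H$ and linearity of the finite sum,
\begin{equation*}
  \sum_{j=0}^{m-1}\psi\bigg(\pi_H\Big(\gamma\Big(t+\frac{j}{m}\Big)\Big)\bigg) = \int_H\sum_{j=0}^{m-1}\phi\bigg(\gamma\Big(t+\frac{j}{m}\Big)x\bigg)\dd\nu_H(x).
\end{equation*}

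The key observation is that for each fixed $x\in H\subset G$, the integrand is exactly the Birkhoff-style sum defining equidistribution, but evaluated at the right-translated function $R_x\phi$. Since the Peter--Weyl decomposition recalled in Section \ref{sec:choosing-filt-compact} gives that each $E_j$ is $R$-invariant, we have $R_x\phi\in E_k$. Applying the $E_k$-equidistribution of $\gamma$ to $R_x\phi$ yields
\begin{equation*}
  \sum_{j=0}^{m-1}\phi\bigg(\gamma\Big(t+\frac{j}{m}\Big)x\bigg) = \sum_{j=0}^{m-1}(R_x\phi)\bigg(\gamma\Big(t+\frac{j}{m}\Big)\bigg) = 0,
\end{equation*}
for every $t\in\T$ and every $x\in H$. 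Hence the integrand above is identically zero and the full sum vanishes, giving the desired equidistribution.

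There is no serious obstacle here: the only nontrivial ingredient is the right-invariance of $E_k$, which is already part of the Peter--Weyl setup recalled just before \eqref{eq:En-equidist-loops}. One might note, as a sanity check, that $\Pi_H(E_k)$ is indeed a finite-dimensional subspace of $C^\infty_\nu(G/H)$ (as the image of a finite-dimensional space under the continuous linear operator $\Pi_H$), so the notion of $\Pi_H(E_k)$-equidistribution is well-posed. Thus the proof amounts to Fubini plus one line of right-invariance.
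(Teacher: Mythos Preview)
Your proof is correct and follows essentially the same route as the paper's: expand $\Pi_H$, swap the finite sum with the $\nu_H$-integral, recognize the inner sum as the equidistribution identity applied to $R_x\phi$, and invoke the $R$-invariance of $E_k$ to conclude it vanishes. The paper's argument is line-for-line the same (with integration variable $y$ in place of your $x$).
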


\begin{proof}
  Let $m$ denote the period of $\gamma$ and $\phi\in E_k$ be
  arbitrary. Then we have
  \begin{displaymath}
    \begin{split}
      \sum_{j=0}^{m-1}\Pi_H(\phi)&\bigg(\pi_H\circ\gamma
      \Big(t+\frac{j}{m}\Big)\bigg) = \sum_{j=0}^{m-1}\int_H
      \phi\bigg(\gamma \Big(t+\frac{j}{m}\Big)y\bigg)\dd\nu_H(y) \\
      &=\int_H \Bigg(\sum_{j=0}^{m-1} \phi\bigg(\gamma
      \Big(t+\frac{j}{m}\Big)y\bigg)\Bigg)\dd\nu_H(y) \\
      &=\int_H \Bigg(\sum_{j=0}^{m-1} (R_y\phi)
      \bigg(\gamma\Big(t+\frac{j}{m}\Big)\bigg)\Bigg)\dd\nu_H(y) = 0,
    \end{split}
  \end{displaymath}
  where last equality is consequence of the $R$-invariance of space
  $E_k$. 
\end{proof}

Now, using Lemma~\ref{lem:E-equid-on-G/H} we can easily extend our
proof of \S\ref{sec:choosing-filt-compact} to the case where $H$ is a
proper subgroup. In fact, given any $\phi\in C^\infty(\T\times G/H)$
and any $k\in\Z$, once again we can define $\hat\phi_k\in
C^\infty(G/H)$ by
\begin{displaymath}
  \hat\phi_k(gH):=\int_\T\phi(t,gH)e^{2\pi ikt}\dd t, \quad\forall
  gH\in G/H,
\end{displaymath}
and so (re)define the filtration $(V_n)_{n\in\N}$ of
$C^\infty_\mu(\T\times G/H)$ analogously to
\eqref{eq:filtration-def-compact-case}:
\begin{displaymath}
  V_n:=\left\{\phi\in C^\infty_\mu(\T\times G/H) :
    \hat\phi_0\in\bigoplus_{j\leq n} \Pi_H(E_j),\ \hat\phi_k\equiv 0,\
    \forall \abs{k}>n\right\},\quad\forall n\in\N, 
\end{displaymath}

Then, invoking Lemma~\ref{lem:E-equid-on-G/H} and the above
filtration, \emph{mutatis mutandis} we can extend the proof of
Lemma~\ref{lem:mainLem} we did in \S\ref{sec:choosing-filt-compact}
 in the case $H=\{1_G\}$ to the general one.

\bibliographystyle{amsalpha} \bibliography{base-biblio}

\providecommand{\bysame}{\leavevmode\hbox to3em{\hrulefill}\thinspace}
\providecommand{\MR}{\relax\ifhmode\unskip\space\fi MR }
\providecommand{\MRhref}[2]{%
  \href{http://www.ams.org/mathscinet-getitem?mr=#1}{#2}
}
\providecommand{\href}[2]{#2}
\begin{thebibliography}{MOP77}

\bibitem[AK11]{AviKocCohomoEqInvDistCirc}
Artur Avila and Alejandro Kocsard, \emph{Cohomological equations and invariant
  distributions for minimal circle diffeomorphisms}, Duke Math. J. \textbf{158}
  (2011), no.~3, 501--536. \MR{2805066}

\bibitem[CG90]{CorwinGreenleaf}
Lawrence~J. Corwin and Frederick~P. Greenleaf, \emph{Representations of
  nilpotent {Lie} groups and their applications. {Part I}}, Cambridge Studies
  in Advanced Mathematics, vol.~18, Cambridge University Press, Cambridge,
  1990, Basic theory and examples. \MR{1070979 (92b:22007)}

\bibitem[FF03]{FlamForniInvDistrHorocycle}
Livio Flaminio and Giovanni Forni, \emph{Invariant distributions and time
  averages for horocycle flows}, Duke Math. J. \textbf{119} (2003), no.~3,
  465--526. \MR{2003124 (2004g:37039)}

\bibitem[FF07]{FlamForniNilflows}
\bysame, \emph{On the cohomological equation for nilflows}, Journal of Modern
  Dynamics \textbf{1} (2007), no.~1, 37--60. \MR{2261071 (2008h:37003)}

\bibitem[For08]{ForniKatokConj}
Giovanni Forni, \emph{On the {Greenfield-Wallach} and {Katok} conjectures in
  dimension three}, Contemporary Mathematics \textbf{469} (2008), 197--213.

\bibitem[Kat01]{KatokRobinson}
Anatole Katok, \emph{Cocycles, cohomology and combinatorial constructions in
  ergodic theory}, Smooth ergodic theory and its applications ({Seattle}, {WA},
  1999), Proc. Sympos. Pure Math., vol.~69, American Mathematical Society,
  Providence, RI, 2001, In collaboration with E. A. Robinson, Jr.,
  pp.~107--173. \MR{1858535 (2003a:37010)}

\bibitem[Mal49]{MalcevHomogenSpaces}
A.~I. Mal{'}cev, \emph{On a class of homogeneous spaces}, Izvestiya Akad. Nauk.
  SSSR. Ser. Mat. \textbf{13} (1949), 9--32. \MR{0028842 (10,507d)}

\bibitem[MOP77]{MoulinPinchonSyst}
Jean Moulin-Ollagnier and Didier Pinchon, \emph{Syst\`emes dynamiques
  topologiques. {I}. \'{E}tude des limites de cobords}, Bull. Soc. Math. France
  \textbf{105} (1977), no.~4, 405--414.

\bibitem[NT12]{NavasTriestinoInvDist}
Andr\'es Navas and Michele Triestino, \emph{On the invariant distributions of
  {$C^2$} circle diffeomorphisms of irrational rotation number}, to appear in
  Mathematische Zeitschrift, 2012.

\bibitem[RH12]{fede}
Federico Rodr\'\i{}guez-Hertz, personal communication, 2012.

\bibitem[Sep07]{SepanskiCompLieGr}
Mark~R. Sepanski, \emph{Compact {Lie} groups}, Graduate Texts in Mathematics,
  vol. 235, Springer, New York, 2007.

\end{thebibliography}

\end{document}